\newcommand*\norm[1]{
	\vert\vert{#1}\vert\vert
}
\newcommand{\bqed}{\hfill\rule{1.5ex}{1.5ex}}
\newcommand{\C}{\mathbb{C}}
\newcommand{\N}{\mathbb{N}}
\newcommand{\R}{\mathbb{R}}
\newcommand{\abs}[1]{\vert #1 \vert}
\renewcommand{\epsilon}{\varepsilon}
\newtheorem{theorem}{Theorem}
\newtheorem*{theorem*}{Theorem}
\newtheorem{claim}{Claim}
\newtheorem{proposition}[theorem]{Proposition}
\theoremstyle{definition} 
\newtheorem{remark}[theorem]{Remark}
\newtheorem{example}[theorem]{Example}
\theoremstyle{definition} 
\font\tenrm = cmr17 at 12pt
\title{\bf \Large The hat polykite as an Iterated Function System}
\author{\normalsize Corey de Wit}
\date{}
\begin{document}
	\maketitle
	
	\begin{abstract}
		This paper describes the celebrated aperiodic hat tiling by Smith et al. [Comb. Theory 8 (2024), 6] as generated by an overlapping iterated function system. We briefly introduce and study infinite sequences of iterated function systems that converge uniformly in each component, and use this theory to model the hat tiling's associated imperfect substitution system.
	\end{abstract}
	
	\section{Introduction}
	
	Iterated function systems (IFSs) are a popular and well-documented model for self-similar processes \cite{fractalseverywhere}. In particular, they have been used to generate self-similar tilings \cite{bandt-tiling, tilinggifs}. A recent advance has extended IFS tiling theory to the general case where the open set condition may not be obeyed \cite{toptiling}. 
	
	In 2024, Smith et al. provided a solution to the `einstein' tiling problem (the existence of a single prototile which only tiles the plane aperiodically), one requiring reflections \cite{hattiling} and another avoiding them \cite{spectretiling}. Can IFS theory model these tilings?
	
	To address this, we introduce \emph{sequential IFSs}: sequences $(F_n = \{f^{(n)}_i : \R^q \to \R^q\}_{i=1}^M)_{n\in\N}$ of contractive IFSs where $f^{(n)}_i \to f_i$ uniformly for each $i$. Families of iterated function systems with similar properties have received little attention, though some literature has remarked on their application to time series forecasting (see Section \ref{section:remarks} for details). 
	
	In this paper, we use these sequences to construct tilings of $\R^q$. For a collection of tiles $T$, define the action of an IFS $F = \{f_i\}_{i=1}^M$ on $T$ by 
	\begin{equation} \label{eq:ifsontiling}
		F(T) := \bigcup_{i=1}^M \{f_i(t) \mid t\in T\} . 
	\end{equation}
	Then for a sequential IFS $(F_n)_{n\in\N}$ such that each $f^{(n)}_i$ has contraction ratio $\lambda$, we generate partial tilings by recursively applying the blowup $\lambda^{-1}F_n$ to some prototile set $P$, adjusting for overlaps at each stage. This approach extends IFS tiling theory to cases where elements of $P$ are not attractors, while still allowing us to describe the tiling's limiting fractal boundary (by recursively applying $F_n$ to $P$, see Proposition \ref{prop:supertile-convergence}). For example, consider a singleton prototile set containing the regular (unit) hexagon, and an IFS sequence
	\begin{equation}\label{eq:hexsifs}
		F_n = \Big\{ f_1^{(n)}(z) = \frac{1}{2}z, f_k^{(n)}(z) = \frac{1}{2}z + \frac{i\sqrt{3}}{2^n}e^{(k-2)i\pi/3}\Big\lfloor 3\cdot 2^{n-2} - \frac{1}{2} \Big\rfloor \mid k = 2,\ldots, 7 \Big\} ,
	\end{equation}
	where for $i=2,\ldots, 7$, $f^{(n)}_i$ places the $n$-th supertile radially such that its nearest tile to the origin is separated from the hexagon at the origin by $\max\{0,n-2\}$ hexagons. Overlaps are exact, so recursive applications of $2F_n$ yield well-defined partial tilings. These supertiles form a nested sequence, thus the tiling is the union limit. See Figure \ref{fig:hex} for the first 4 normalised supertiles. 
	
	\begin{figure}[h!] 
		\begin{minipage}{0.24\textwidth}
			\centering
			\includegraphics[scale = 0.33]{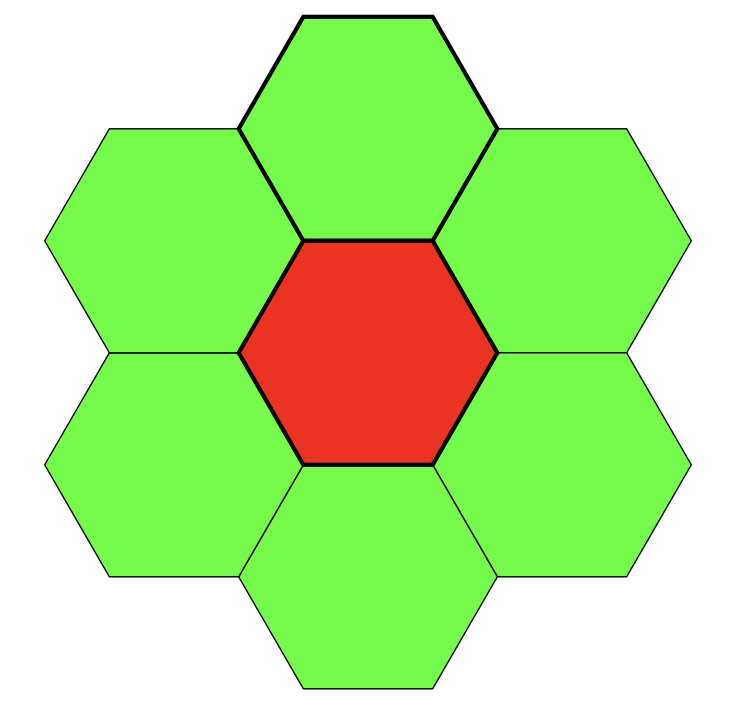}
		\end{minipage}
		\begin{minipage}{0.24\textwidth}
			\centering
			\includegraphics[scale = 0.33]{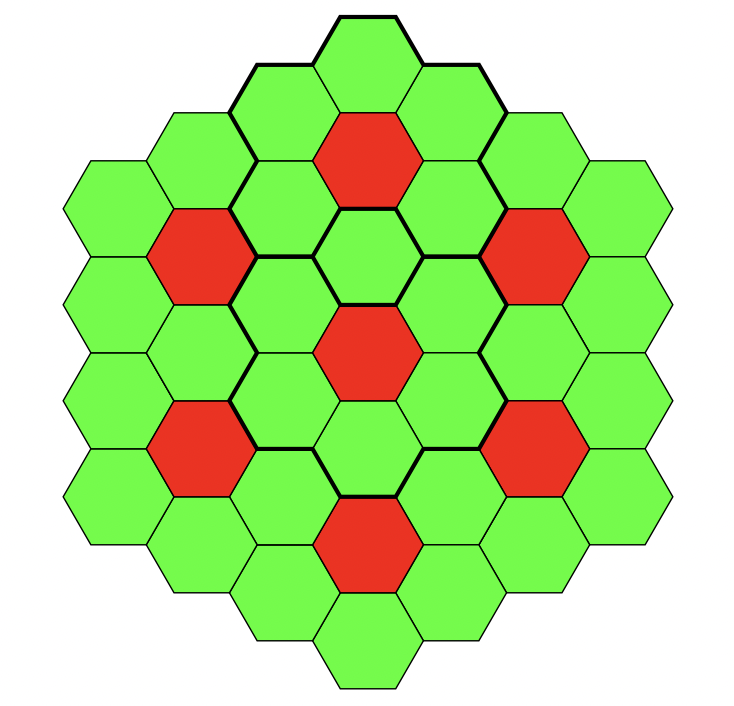}
		\end{minipage}
		\begin{minipage}{0.24\textwidth}
			\centering
			\includegraphics[scale = 0.33]{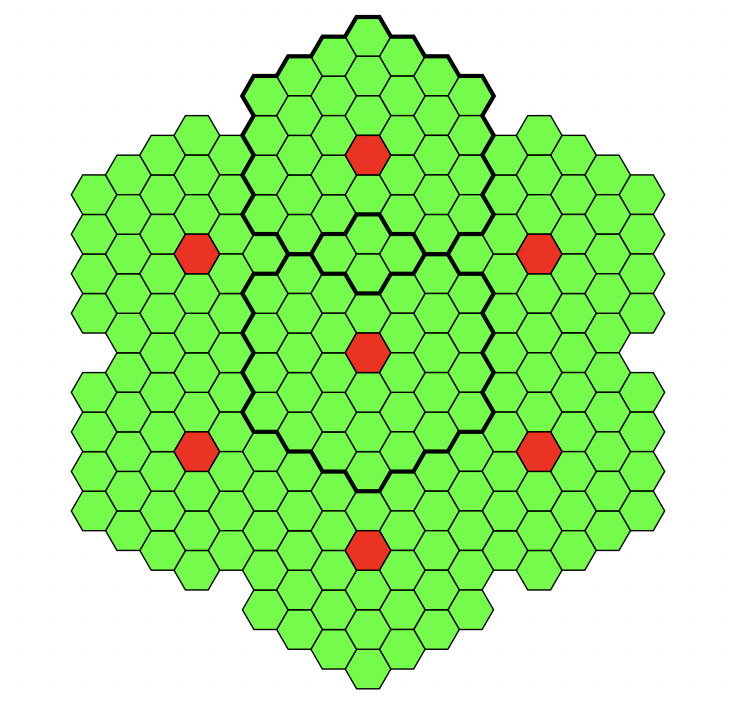}
		\end{minipage}
		\begin{minipage}{0.24\textwidth}
			\centering
			\includegraphics[scale = 0.33]{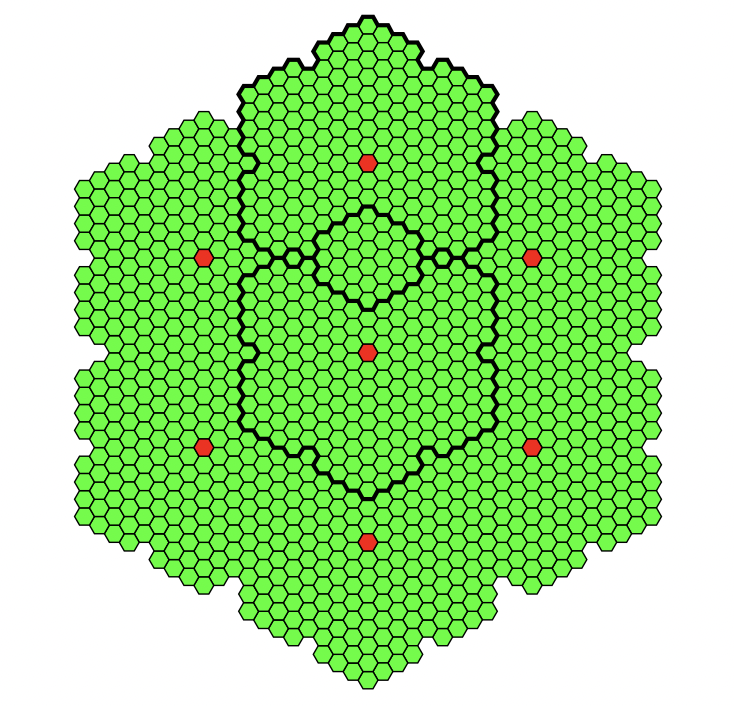}
		\end{minipage}
		\caption{First four supertiles corresponding to the SIFS \eqref{eq:hexsifs}, with the supertiles corresponding to the first and second indexed functions outlined. The limiting boundary of the tiling construction is the (hexagonal) convex hull of the supertiles above, which is also the attractor of the limit IFS. }\label{fig:hex}
	\end{figure}
	
	We will demonstrate this method's ability to model aperiodic tilings, in particular ones which are described by an imperfect substitution (where the support the substituted supertiles are not scaled copies of the original prototiles, see \cite{imperfectsub}), with the system presented in \cite[Figure 2.17]{hattiling}. 
	
	\section{Iterated Function Systems \label{section:ifs}}
	
	Let $F=\{f_i\}_{i=1}^M$ be a finite collection of contractive homeomorphisms $f_i: \R^q \to \R^q$ such that
	\[ \text{$d(f_i(x), f_i(y))\leq \lambda d(x,y)$, for some $0<\lambda<1$, for all $x,y,i$} , \]
	where $d$ is the Euclidean metric. We call $F$ an \emph{iterated function system} (IFS). We allow IFS maps to act on $\mathcal{K}$ (the set of all non-empty compact subsets of $\R^q$) by set image, and note they remain $\lambda$-Lipschitz with respect to the Hausdorff metric (which we will also denote $d$). It is well-known that $F$ possesses a unique \emph{attractor}, the only $A\in \mathcal{K}$ which obeys 
	\begin{equation} \label{eq:attractor}
		A = \bigcup_{i=1}^M f_i(A) . 
	\end{equation}
	We will need the following notions related to symbolic handling of subsets of $A$. Let $\Omega := \{1,2,\ldots, M\}^\mathbb{N}$ be the set of infinite strings of the form ${\bf j} = j_1j_2\cdots$ where each $j_i \in \{1,2,\ldots, M\}$, and $\Omega^* := \bigcup_{k=1}^\infty \{1,2,\ldots,M\}^k$ be the set of all finite strings. We define the action of ${\bf i}\in \Omega^*$ on strings by concatenation: for some ${\bf j} \in \Omega^* \cup \Omega$, define ${\bf i}{\bf j}= i_1i_2\ldots i_nj_1j_2\ldots$. The address ${\bf j}$ truncated to length $n$ is denoted by ${\bf j}\vert n = j_1j_2\ldots j_n$, and we define 
	\[ f_{{\bf j}\vert n} = f_{j_1}f_{j_2} \cdots f_{j_n} = f_{j_1}\circ f_{j_2} \circ \cdots \circ f_{j_n} . \]
	We define a metric $d'$ on $\Omega$ by $d'({\bf j},{\bf k}) = 2^{-\max \{n \vert j_m = k_m, m = 1,2,\ldots, n \}}$ for ${\bf j} \neq {\bf k}$, and note $(\Omega, d')$ is a compact metric space. Then a continuous surjection $\pi : \Omega \to A$ is defined by 
	\begin{equation} \label{eq:pi}
		\pi({\bf j}) = \lim_{m\to\infty} f_{{\bf j}\vert m}(x) ,
	\end{equation}
	which we call the \emph{coding map for $F$}. It is well-known that the limit is independent of $x$. Also, the convergence is uniform in ${\bf j}$ over $\Omega$, and uniform in $x$ over any element of $\mathcal{K}$. We say ${\bf j} \in \Omega$ is an \emph{address} of the point $\pi({\bf j}) \in A$. Since $\pi^{-1}(x)$ is closed and non-empty for all $x\in A$, a map $\tau:A\rightarrow\{  1,2,\dots,M\}  ^{\mathbb{N}}$ and a set $\Sigma$ are well-defined by
	\begin{align*}
		\tau(x)  &  :=\max\{\mathbf{k}\in\Sigma \mid \pi(\mathbf{k})=x\},\\
		\Sigma &  :=\tau(A)=\{  \tau(x):x\in A\}  ,
	\end{align*}
	where the maximum is with respect to lexicographical ordering ($i > i+1$). We call $\tau(x)$ the \textit{top address} of $x\in A$, and $\Sigma$ the \textit{top code space}. See \cite{blowup} and references therein for more details. In particular, let $\sigma : \Omega \to \Omega$ be the shift operator defined by $\sigma({\bf j}) = j_2j_3\ldots$, noting $(f_{j_1}^{-1}\circ \pi)({\bf j}) = (\pi \circ \sigma)({\bf j})$. A key property of tops code space is its shift invariance: $\sigma(\Sigma) = \Sigma$.
	
	\section{Sequential Iterated Function Systems \label{section:sifs}}
	
	Let $\mathscr{F}_M$ denote the collection of $M$-element IFSs, equipped with the metric $d_\mathscr{F}(F, G) := \max_i \norm{f_i - g_i}_\infty$. Note that $(\mathscr{F}_M, d_\mathscr{F})$ is not complete, since the (uniform) limit of a sequence of contractions may not be a contraction itself. 
	
	We call a sequence $(F_n = \{f_i^{(n)}\}_{i=1}^M)_{n\in\N} \subset \mathscr{F}_M$ a \emph{sequential IFS} (SIFS) if it converges in $(\mathscr{F}_M,d_\mathscr{F})$. Equivalently, $(F_n)_{n\in\N}$ is an SIFS when $(f_i^{(n)})_{n\in\N}$ uniformly converges to some $f_i$ for all $i$, and $F = \{f_i\}_{i=1}^M$ is itself an IFS. Let $A_n$, $A$ and $\pi_n$, $\pi$ denote the attractor and associated coding map as defined by Equations \eqref{eq:attractor} and \eqref{eq:pi} of $F_n$, $F$, respectively. We will consider SIFS's where every $F_n$ has a fixed contraction factor $\lambda$.
	
	\begin{proposition}\label{prop:attractor-convergence}
		Let $(F_n)_{n\in\N}$ be an SIFS. Then $A_n \to A$ with respect to $d$.
	\end{proposition}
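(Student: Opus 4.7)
The plan is to mimic the classical fixed-point argument for IFS attractors, exploiting that $A_n = F_n(A_n)$ and $A = F(A)$, combined with the uniform contraction factor $\lambda$. Since $d_\mathscr{F}(F_n, F)\to 0$ by hypothesis, one expects the attractors (the fixed points of the associated Hutchinson-type operators on $(\mathcal{K}, d)$) to converge at the same rate, up to the usual amplification factor of $\tfrac{1}{1-\lambda}$.

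Concretely, I would start from
\[ d(A_n, A) = d(F_n(A_n), F(A)) \leq d(F_n(A_n), F_n(A)) + d(F_n(A), F(A)) \]
by the triangle inequality for the Hausdorff metric. The first term is controlled by contractivity: each $f_i^{(n)}$ is $\lambda$-Lipschitz on $(\mathcal{K}, d)$, and the standard union estimate $d(\bigcup_i X_i, \bigcup_i Y_i) \leq \max_i d(X_i, Y_i)$ gives $d(F_n(A_n), F_n(A)) \leq \lambda d(A_n, A)$. For the second term, the same union estimate together with the pointwise bound $\norm{f_i^{(n)}(x) - f_i(x)} \leq \norm{f_i^{(n)} - f_i}_\infty$ on the compact set $A$ yields $d(F_n(A), F(A)) \leq d_\mathscr{F}(F_n, F)$.

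Combining these bounds produces $d(A_n, A) \leq \lambda d(A_n, A) + d_\mathscr{F}(F_n, F)$, and rearranging gives
\[ d(A_n, A) \leq \frac{d_\mathscr{F}(F_n, F)}{1-\lambda} \longrightarrow 0 , \]
which is the claim. There is no genuine obstacle here; the only ingredient worth flagging is the union estimate for Hausdorff distance, which I would state (or cite) explicitly to keep the proof self-contained. Notice also that the argument only uses the shared contraction factor $\lambda$ of the $F_n$; contractivity of the limit $F$ is automatic but not actually invoked.
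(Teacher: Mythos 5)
Your proof is correct, but it takes a genuinely different route from the paper's. You run the classical fixed-point perturbation argument: writing $A_n = F_n(A_n)$ and $A = F(A)$ for the Hutchinson operators, splitting $d(A_n,A)$ by the triangle inequality, and absorbing the $\lambda d(A_n,A)$ term to get the explicit bound $d(A_n,A) \leq (1-\lambda)^{-1} d_{\mathscr{F}}(F_n,F)$. Every step checks out, including the union estimate for the Hausdorff metric (which, as you say, deserves an explicit statement), and your closing remark is sound in the equicontractive setting since a uniform limit of $\lambda$-contractions is again a $\lambda$-contraction. The paper instead proves something stronger: it telescopes $d\bigl(f^{(n)}_{{\bf j}\vert k}(x), f_{{\bf j}\vert k}(x)\bigr) < \epsilon(1+\lambda+\cdots+\lambda^{k-1})$ along each address and concludes that the coding maps converge, $\pi_n({\bf j}) \to \pi({\bf j})$ uniformly over $\Omega$, from which $A_n \to A$ follows since $A_n = \pi_n(\Omega)$. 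Your argument is shorter and yields the same quantitative rate, but it only controls the attractors as sets; the paper's addresswise version is what gets reused (same $\epsilon$, same $N$, same telescoping estimate) in the proof of Proposition \ref{prop:supertile-convergence}, so if you adopted your proof you would need to re-derive that uniform coding-map estimate there anyway.
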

	\begin{proof}
		For any $\epsilon > 0$ consider $N_i \in\N$ such that $d(f_i^{(n)}(x), f_i(x)) < \epsilon$ for all $n\geq N_i$ and $x\in\R^q$. Then by setting $N := \max\{N_i\}$, for any ${\bf j} \in \Omega$ and $k\in\N$, for all $n\geq N$
		\begin{align*}
			d\left(f^{(n)}_{{\bf j}\vert k}(x), f_{{\bf j}\vert k}(x) \right) &< d\Big(f^{(n)}_{{\bf j}\vert k}(x), \left(f_{j_1}\circ f^{(n)}_{\sigma\left({\bf j}\vert k\right)}\right)(x) \Big) + d\Big(\left(f_{j_1}\circ f^{(n)}_{\sigma\left({\bf j}\vert k\right)}\right)(x), f_{{\bf j}\vert k}(x) \Big) \\
			&< \epsilon + \lambda d\Big(f^{(n)}_{\sigma\left({\bf j}\vert k\right)}(x), f_{\sigma\left({\bf j}\vert k\right)}(x) \Big) \\
			&\;\;\vdots \\
			&<\epsilon(1 + \lambda + \cdots  + \lambda^{k-1})
		\end{align*}
		where the first inequality comes from the triangle inequality, the second inequality uses the contractivity of each $f_i$, and the final inequality is obtained by recursively applying the second inequality to its last term. Thus as $k\to\infty$, we get
		\[ d\left(\pi_n({\bf j}), \pi({\bf j})\right) < (1-\lambda)^{-1}\epsilon \]
		and the sequence of points with address ${\bf j}$ in $A_n$ converges to the point with address ${\bf j}$ (on $A$).
	\end{proof}
	
	\subsection{SIFS systems}
	
	Define a \emph{tile} as an element of $\mathcal{K}$ homeomorphic to the unit ball. For a finite set of tiles $T$, we call the triple $((F_n)_{n\in\mathbb{N}}, T, \alpha)$ an \emph{SIFS system}, where $M>\abs{T}$ and $\alpha: \{1,\ldots, M\} \to T$ is a surjection assigning each function index a `starting tile'. In particular, this system is equipped with the map $\pi_T : \Omega^* \to \mathcal{K}$ defined by
	\[ \pi_T(j_1j_2\ldots j_k) = f_{j_1}^{(k)}f_{j_2}^{(k-1)}\cdots f_{j_k}^{(1)}(\alpha(j_k)) \]
	and we say $j_1j_2\ldots j_k$ is the address of the depth-$k$ tile $\pi_T(j_1j_2\ldots j_k)$. We denote the collection of tiles at depth-$k$ as
	\begin{equation} \label{eq:supertile}
		S_k := \{ \pi_T(j_1j_2\ldots j_k) \mid j_1 j_2\ldots j_k \in\Omega^* \} ,
	\end{equation}
	which we call the \emph{$k$-th collection} (note we set $S_0 = \{T\}$), and whose support we denote $\cup S_k$. A key property of this construction is 
	\begin{equation}\label{eq:supertilerecursion}
		S_{k+1} = \bigcup_{i=1}^M f^{(k+1)}_i(S_k) .
	\end{equation}
	We note that it is not restrictive for every $F_n$ to have the same length due to the requirement for convergence. That is, if some index $i$ is only required after some $N\in\N$ we can set $f_i^{(n)}$ to the zero-map for $n<N$. Additionally if there an $N\in\N$ where some index is no longer needed, we can let $f_i^{(n)} = f_j^{(n)}$ for all $n> N$ and some non-redundant index $j$. 
	
	\begin{example}
		Any constant sequence $(F_n)_{n\in\N}$ where $F_n := F$ for some $F\in\mathscr{F}_M$ is an SIFS. Additionally, the definitions above for $((F_n)_{n\in\N}, \{A\}, \alpha)$ where $\alpha$ is the trivial map agree with the notation in Section \ref{section:ifs}. 
	\end{example}
	
	\begin{proposition} \label{prop:supertile-convergence}
		Let $((F_n)_{n\in\mathbb{N}}, T, \alpha)$ be an SIFS system. Then $\cup S_n \to A$ with respect to $d$. 
	\end{proposition}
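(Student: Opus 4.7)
The plan is to show that for each infinite address ${\bf j} \in \Omega$, the depth-$k$ tile $\pi_T({\bf j}\vert k)\in S_k$ lies within small Hausdorff distance of the single point $\pi({\bf j}) \in A$, uniformly in ${\bf j}$. Since every $y \in A$ equals $\pi({\bf j})$ for some ${\bf j}$, such a uniform bound controls both halves of the Hausdorff distance $d(\cup S_k, A)$ at once, and the result will follow.

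Set $\epsilon_n := \max_i \|f_i^{(n)} - f_i\|_\infty$, so that $\epsilon_n \to 0$ by hypothesis and in particular $\epsilon_n \leq C$ for some constant $C$. Let $B$ be a compact set containing $\bigcup_{t\in T} t$. First I would establish the telescoping estimate
\[ d\bigl(f_{j_1}^{(k)} f_{j_2}^{(k-1)}\cdots f_{j_k}^{(1)}(B),\ f_{j_1} f_{j_2}\cdots f_{j_k}(B)\bigr) \ \leq\ \sum_{m=1}^{k} \lambda^{k-m} \epsilon_m \]
by repeating the triangle-and-contractivity recursion of Proposition~\ref{prop:attractor-convergence}: peeling off the outermost pair $f_{j_1}^{(k)}$ versus $f_{j_1}$ costs $\epsilon_k$, applying $f_{j_1}$ to the inner composition pays a Lipschitz factor of $\lambda$, and recursion through all $k$ levels produces exactly one term $\lambda^{k-m}\epsilon_m$ per stage.

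The key new wrinkle compared with Proposition~\ref{prop:attractor-convergence} is that this composition now mixes IFSs at \emph{every} level: the innermost maps are $f_{j_k}^{(1)}$, drawn from the start of the SIFS where $\epsilon_m$ need not be small. I expect the main obstacle to be showing that the telescoping sum tends to $0$ uniformly in ${\bf j}$. I would handle this by a splitting argument: given $\epsilon > 0$, pick $N$ so that $\epsilon_m < \epsilon$ for all $m \geq N$; then for $k \geq N$,
\[ \sum_{m=1}^{k} \lambda^{k-m}\epsilon_m \ \leq\ NC\lambda^{k-N+1} + \frac{\epsilon}{1-\lambda}, \]
and the first term vanishes as $k \to \infty$, leaving a bound independent of ${\bf j}$ that can be made arbitrarily small.

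To finish, I would combine the above with the standard IFS fact that $d(f_{j_1}\cdots f_{j_k}(B),\{\pi({\bf j})\}) \to 0$ uniformly in ${\bf j}$, which follows from the uniform convergence accompanying \eqref{eq:pi} together with the diameter bound $\text{diam}(f_{j_1}\cdots f_{j_k}(B)) \leq \lambda^k \text{diam}(B)$. Since $\alpha(j_k) \subseteq B$, the Hausdorff triangle inequality then yields $d(\pi_T({\bf j}\vert k), \{\pi({\bf j})\}) \to 0$ uniformly in ${\bf j}$, which is exactly what is needed. The nontrivial obstacle is the mixed-IFS bookkeeping handled by the splitting step; everything else is a direct extension of the contraction argument already carried out in Proposition~\ref{prop:attractor-convergence}.
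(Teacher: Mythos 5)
Your proof is correct and takes essentially the same approach as the paper's: both reduce the claim to showing that each depth-$k$ tile $\pi_T({\bf j}\vert k)$ converges, uniformly in ${\bf j}$, to the point $\pi({\bf j})\in A$, and both handle the mixed-level composition by splitting at the index $N$ beyond which $\epsilon_m < \epsilon$, with the early inaccurate maps suppressed by the factor $\lambda^{k-N}$ contributed by the contractions stacked above them. The paper packages that split as a diameter bound $\lambda^{n-N+1}\abs{A_n}$ on the inner depth-$(N-1)$ tile plus the telescoping estimate from Proposition \ref{prop:attractor-convergence}, whereas you split the telescoping sum $\sum_{m=1}^{k}\lambda^{k-m}\epsilon_m$ directly, but the quantitative content is identical.
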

	\begin{proof}
		Using the same $\epsilon$ and $N$ from the proof of Proposition \ref{prop:attractor-convergence}, note that
		\begin{align*}
			d\left(\pi_T({\bf j}\vert n), f_{{\bf j}\vert n}(x) \right) &= d\left(f_{j_1}^{(n)}\circ\cdots\circ f_{j_{n-N + 1}}^{(N)}(\pi_T(\sigma^{n-N+1}({\bf j})\vert (N-1))), f_{{\bf j}\vert n}(x) \right) \\
			&< d\left(f_{j_1}^{(n)}\circ\cdots\circ f_{j_{n-N + 1}}^{(N)}(\pi_T(\sigma^{n-N+1}({\bf j})\vert (N-1))) , f_{j_1}^{(n)}\circ\cdots\circ f_{j_{n-N + 1}}^{(N)}(x) \right) \\
			&\quad + d\left(f_{j_1}^{(n)}\circ\cdots\circ f_{j_{n-N + 1}}^{(N)}(x), f_{{\bf j}\vert n}(x) \right) \\
			&< \lambda^{n-N+1}\abs{A_n} + \epsilon(1+\lambda+\cdots + \lambda^{n-N}) ,
		\end{align*}
		where the first inequality comes from the triangle inequality, and the first half of the second inequality comes from the contractivity of each $f_i$. As $n\to\infty$, the first term vanishes and the second term tends to the same bound as in Proposition \ref{prop:attractor-convergence}. Thus the sequence of tiles in $S_n$ with address ${\bf j}\vert n$ tends to the point on $A$ with address ${\bf j}$. 
	\end{proof}
	
	\begin{remark}
		By the proof above, the natural extension of $\pi_T$'s domain to $\Omega^* \cup \Omega$ agrees with $\pi$, in the sense that
		\[ \pi_T({\bf j}) := \lim_{k\to\infty} \pi_T(j_1\cdots j_k) = \pi({\bf j}) . \]
	\end{remark}

	\subsection{Tiling SIFSs \label{section:tsifs}}
	
	The goal of this construction is to determine which SIFS systems produce a well-defined tiling of $\cup S_k$ for each $k$ with tiling set $\lambda^kT$. By Proposition \ref{prop:supertile-convergence}, the limiting attractor $A$ must itself be a tile in order for blowups of $k$-collections to tile $\R^q$. This implies tiles in each $k$-collection will overlap, whether only along their boundaries or something more non-trivial. We will explore a class of overlaps which allows us to extract a just-touching partial tiling from each $S_k$, and describe the blowup process to achieve a tiling with tiling set $T$. 
	
	Our process for `cutting away overlaps' to produce a just-touching tiling follows the lexicographic-based method in \cite{blowup, toptiling}. That is, we define a \emph{processed tile} to be
	\begin{equation}\label{eq:processing}
		\widetilde{\pi}_T(j_1j_2\ldots j_k) := \overline{\pi_T(j_1j_2\ldots j_k) \setminus \bigcup_{i > j} \pi_T(i_1i_2\ldots i_k)} ,
	\end{equation}
	and define the \emph{processed $k$-collection} $\widetilde{S_k}$ as the set of all tiles in $S_k$, processed (those equal to the empty set are removed). Since we wish to only tile with elements of $T$, our {\bf first condition} for SIFS systems is each processed tile must remain in $T\cup\{\emptyset\}$. 
	
	By Equation \eqref{eq:supertilerecursion} and Proposition \ref{prop:supertile-convergence}, the overlaps between the supports of shrunk $(n-1)$-collections in $S_n$ approaches the overlaps between the sets $f_i(A)$ in $A$ as $n\to\infty$ (with respect to $d$). Combining this with our first condition (which focuses on the infinitesimal scale of $S_k$), the {\bf second condition} we impose on SIFS systems is the overlap structure of each $S_k$ must match that of the attractor at depth $k$. Notice this also means the limiting IFS $F$ will be of finite type (see \cite{bandtbarnsley} and references therein for more details). We will make this more precise: let
	\[
		\Sigma_{T,k} := \{ j_1j_2\ldots j_k \in \{1,2,\ldots, M\}^k \mid \widetilde{\pi}_T(j_1j_2\ldots j_k) \in T \} 
	\]
	be the set of all tile addresses at depth-$k$ which don't vanish when processed. We require $\Sigma_{T,k} = \{{\bf j}\vert k : {\bf j} \in \Sigma\}$ for all $k$. We remark that the backward construction of this code space (i.e. copies $S_n$ for small $n$ become infinitesimally small in $S_k$ as $k\to\infty$) mirrors the generation of top addresses via the top dynamical system (see \cite{blowup}).
	
	Our process for tiling subsets of $\R^q$ with infinite volume is similar to previous IFS methods \cite{tilinggifs, toptiling}. Choose some ${\bf j}\in \Omega$. Blowups of $k$-collections about ${\bf j}$ are nested in the sense that  
	\begin{equation} \label{eq:tilinginclusion}
		\begin{aligned}
			\Big(f_{j_1}^{(1)}\Big)^{-1}\cdots \Big(f_{j_k}^{(k)}\Big)^{-1}(S_k) &=\Big(f_{j_1}^{(1)}\Big)^{-1}\cdots \Big(f_{j_{k+1}}^{(k+1)}\Big)^{-1}(f_{j_{k+1}}^{(k+1)}S_k) \\
			&\subset \Big(f_{j_1}^{(1)}\Big)^{-1}\cdots \Big(f_{j_{k+1}}^{(k+1)}\Big)^{-1}(S_{k+1}) .
		\end{aligned}
	\end{equation}
	To prove this holds upon processing, first notice the equality in Equation \eqref{eq:tilinginclusion} implies the tile with $i_1\ldots i_k$ in the blown-up $k$-collection $S_k$ is in the same position as the tile with address $j_{k+1}i_1\ldots i_k$ in the blown-up $S_{k+1}$. Additionally, when we compare their processing in Equation \eqref{eq:processing}, the condition for $s > i_1\ldots i_k$ is less restrictive than $s > j_{k+1}i_1\ldots i_k$, thus 
	\begin{equation}\label{eq:nonincseq}
		\Big(f_{j_1}^{(1)}\Big)^{-1}\cdots \Big(f_{j_k}^{(k)}\Big)^{-1} \widetilde{\pi}_T(i_1i_2\ldots i_k) \supseteq \Big(f_{j_1}^{(1)}\Big)^{-1}\cdots \Big(f_{j_k+1}^{(k+1)}\Big)^{-1} \widetilde{\pi}_T(j_{k+1}i_1i_2\ldots i_{k}) . 
	\end{equation}
	
	Finally, since $T$ is finite, the non-increasing sequence of processed tiles with address $j_m\ldots j_{k+1}i_1\ldots i_k$ in the blown-up $\widetilde{S_m}$ must stabilise. More generally, there exists a smallest $M\in\N$ such that the collection processed tiles in the blown-up $\widetilde{S_m}$ with a start of $j_m\ldots j_{k+1}$ remains the same for $m\geq M$. Let the stabilised processing of $\widetilde{S_k}$ be denoted
	\[ \widehat{S_k} := \bigg\{ \Big(f_{j_{k+1}}^{(k+1)}\Big)^{-1}\cdots \Big(f_{j_M}^{(M)}\Big)^{-1} \widetilde{\pi}_T(j_M\ldots j_{k+1}i_1i_2\ldots i_k) \mid i_1i_2\ldots i_k \in \Sigma_{T,k} \bigg\} . \]
	
	\begin{theorem}
		If an SIFS system satisfies the first and second condition outlined in this section, then 
		\[ \bigcup_{k=1}^\infty \Big(f_{j_1}^{(1)}\Big)^{-1}\cdots \Big(f_{j_k}^{(k)}\Big)^{-1}(\widehat{S_k}) \]
		is a well-defined tiling with tiling set $T$.
	\end{theorem}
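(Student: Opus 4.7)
The plan is to verify in sequence: (i) $\widehat{S_k}$ is well-defined (the stabilization index $M$ exists); (ii) each element of the blown-up $\widehat{S_k}$ is a rigid copy of some $t\in T$; (iii) the blown-up collections $\{(f_{j_1}^{(1)})^{-1}\cdots(f_{j_k}^{(k)})^{-1}(\widehat{S_k})\}_k$ are nested in $k$ so the union is coherent; and (iv) the resulting family is just-touching.

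For (i), the first condition forces $\widetilde{\pi}_T(j_m\ldots j_{k+1}i_1\ldots i_k)\in T\cup\{\emptyset\}$, and since every $f_i^{(m)}$ is a $\lambda$-contraction, the blown-up sets appearing in \eqref{eq:nonincseq} all live at scale $\lambda^k$ within a bounded region around the corresponding address. Hence as $m$ varies they take only finitely many values, and the non-increasing chain \eqref{eq:nonincseq} must stabilize at some finite $m=M$; the finiteness of $\Sigma_{T,k}$ lets us choose $M$ uniformly over $i_1\ldots i_k\in\Sigma_{T,k}$. Step (ii) is then immediate: the outer blowup $(f_{j_1}^{(1)})^{-1}\cdots(f_{j_k}^{(k)})^{-1}$ rescales by $\lambda^{-k}$, returning a unit-scale rigid copy of an element of $T$.

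For (iii), the pre-processed equality in \eqref{eq:tilinginclusion} shows that the blown-up depth-$k$ tile with address $i_1\ldots i_k$ coincides with the blown-up depth-$(k+1)$ tile with address $j_{k+1}i_1\ldots i_k$. Applying \eqref{eq:nonincseq} together with stabilization then decomposes it into the finitely many stabilized depth-$(k+1)$ tiles indexed by the admissible first digit at this position. Using the shift-invariance $\sigma(\Sigma)=\Sigma$ together with the second condition to identify which prolongations lie in $\Sigma_{T,k+1}$, we see the blown-up $\widehat{S_{k+1}}$ refines and contains the blown-up $\widehat{S_k}$.

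The most delicate step is (iv), the just-touching property, and this is where I expect the main obstacle to lie. Here I would appeal to the lexicographic top-address construction of \cite{blowup, toptiling}, which produces a just-touching tiling from the attractor of any IFS of finite type. The second condition aligns $\Sigma_{T,k}$ with the depth-$k$ truncation of the top code space $\Sigma$ of the limit IFS $F$, so by Proposition \ref{prop:supertile-convergence} the processed collection $\widetilde{S_M}$ for large $M$ approximates (and, once stabilized, locally coincides with) the corresponding processed cover $\{f_{{\bf i}\vert M}(A)\}$ of the limit attractor. The static just-touching result of \cite{blowup, toptiling} applies to this cover; transferring the property across the stabilization and the identification of addresses, and combining with the nesting from (iii), yields the just-touching tiling with tiling set $T$.
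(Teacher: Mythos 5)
Your outline reproduces the paper's own argument for the first three steps; the paper in fact gives no separate proof environment for this theorem, treating it as the culmination of the construction that precedes it, namely the nesting identity \eqref{eq:tilinginclusion}, the monotonicity \eqref{eq:nonincseq} of processed tiles under prolongation of the address by $j_{k+1}$, and the stabilisation of these non-increasing chains forced by the finiteness of $T$ (your remark that only finitely many shapes at scale $\lambda^k$ can occur in a bounded region is exactly the intended justification, made a little more explicit than the paper's one-line appeal to $|T|<\infty$). Where you genuinely diverge is step (iv). The paper obtains the just-touching property directly from the lexicographic processing \eqref{eq:processing}: for two same-depth addresses $\mathbf{a}<\mathbf{b}$ one has $\widetilde{\pi}_T(\mathbf{a})\subseteq\overline{\pi_T(\mathbf{a})\setminus\pi_T(\mathbf{b})}$, which misses $\operatorname{int}(\pi_T(\mathbf{b}))\supseteq\operatorname{int}(\widetilde{\pi}_T(\mathbf{b}))$, so each $\widetilde{S_m}$ has pairwise disjoint interiors by construction; since every tile of $\widehat{S_k}$ is drawn (after a common blow-up) from the single level $\widetilde{S_M}$, the same holds for $\widehat{S_k}$, and the nesting from step (iii) propagates disjointness to the union over $k$. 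Your detour through the limit attractor --- invoking Proposition \ref{prop:supertile-convergence}, the static finite-type tiling result of the cited references, and then ``transferring the property across the stabilisation'' --- is the vaguest part of the proposal and is not actually needed: no limiting or approximation argument is required for disjointness of interiors, and the second condition's real role is only to guarantee that the surviving address sets $\Sigma_{T,k}$ are the truncations of a single shift-invariant code space, so that the stabilised collections cover $\cup S_k$ and nest coherently. Both routes reach the conclusion, but the local, finite-level argument is simpler and avoids the unproved transfer step that your version leans on.
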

	
	See \cite{reversible, toptiling} for details regarding which blowup strings ${\bf j}$ give a tiling of $\R^q$. A special case of tiling SIFSs arises when $T$ is a singleton set. In this case, the first condition forces overlaps to be exact, that is two different $k$-collections can only overlap along a common 1-skeleton. Furthermore, the second condition implies that any overlap in $S_k$ is a shrunken copy of $S_{k-1}$. Together, these properties imply the tile in $T$ is uniquely determined by $(F_n)_{n\in\N}$ and can be constructed as follows. For some $K\in\mathcal{K}$ and $F\in\mathscr{F}_M$, let
	\[ \text{Ov}_{F}(K) := \overline{\bigcup_{i,j} (f_i^{-1}f_j(K) \cap K)} \]
	be the union of all relative overlaps between sets $f_i(K)$ and $f_j(K)$ (see more on Bandt's neighbours in \cite{bandtbarnsley} and references therein). Then
	\begin{equation} \label{eq:startingtile}
		T := \lim_{n\to\infty} \text{Ov}_{F_1}\circ \cdots \circ \text{Ov}_{F_n}(A) ,
	\end{equation}
	where the limit is with respect to $d$. Following a discussion, we will give an example of such a tiling SIFS -- the class of aperiodic tiles developed by \cite{hattiling}.
	
	\subsection{General remarks} \label{section:remarks}
	
	The work in this section generalises to non-equicontractive SIFS's. This construction can be extended to cases where $T$ is countably infinite, with the domain of $\alpha$ adjusted to $\Omega^*$. In this setting, the second condition no longer implies $F$ is of finite type, and our argument for defining $\widehat{S_k}$ becomes invalid. However, \cite{toptiling} provides conditions on the blowup string which force sequences of processed tiles of the form \eqref{eq:nonincseq} to still stabilise after a finite number of steps. The situation discussed in \cite{toptiling} then corresponds to a tiling SIFS where the SIFS is constant, $T$ is the set of all possible top tiles, and $\alpha$ maps a string to the top tile associated with the corresponding $\Sigma$-cylinder set. 
	
	Additionally, Proposition \ref{prop:attractor-convergence} and \ref{prop:supertile-convergence} can be extended to self-similar measures. Denote $H_{F,p}$ and $\mu_{F,p}$ the Hutchinson operator and associated fixed point (self-similar measure), respectively, for an $F\in \mathscr{F}_M$ and probability vector $p\in\Delta^{M-1}$. Then $\mu_{F_n,p} \to \mu_{F,p}$, and letting $\mu_t = H_{F_n,p}(\mu_{t-1})$ with $\mu_0$ a probability measure with finite first moment, $\mu_t \to \mu_{F,p}$, with respect to the Monge-Kantorovich metric.
	
	We will briefly comment on other work which discusses variants of this section's introductory material. \cite{sequences1} considers SIFSs where the contraction factors of $f_i^{(n)}$ for $n\in\N$ are non-decreasing sequences for all $i = 1,2,\ldots, M$ to ensure the limiting IFS remains in $\mathscr{F}_M$. \cite{sequences2} (and references therein) analyse the sequence of self-similar measures associated with SIFSs whose IFSs are of the form $F_n := \{ \gamma_n \circ f_i \mid i\in I\}$ where $f_i$ are contractions, $(\gamma_n)_{n\in\N}$ is a convergent sequence of Lipschitz mappings, and $I$ is an uncountably infinite index set. The former queries the application of SIFSs to geographical time-dependent data, whereas the latter questions mentions its use in the theory of differential equations and time series data. 
	
	We also note that \cite{akiyama} constructs another way of tiling with overlapping imperfect substitutions, which involves assigning tiles probability weights and considering systems where the weights assigned to overlapped tiles sum to 1. This method requires both a tile set and a substitution rule, whereas the goal of this method is to determine appropriate tiling sets for a given rule (not necessarily a substitution as in Figure \ref{fig:hex}).
	
	\section{The Hat Tiling as a tiling SIFS}
	
	The hat tiling in \cite{hattiling} can be modelled by an imperfect substitution system. \emph{``At first glance, these supertiles appear to be scaled-up copies of the metatiles. If that were so, we could perhaps proceed to define a typical substitution tiling, where each scaled-up supertile is associated with a set of rigidly transformed tiles. However, none of the supertiles is similar to its corresponding metatile"}. In fact, the boundary of each supertile becomes progressively rougher. We will give a brief description of their rule and provide additional insight into their characterisation using tops. 
	
	\begin{figure}[h!] 
		\centering
		\begin{minipage}{0.4\textwidth}
			\centering
			\includegraphics[scale = 0.2]{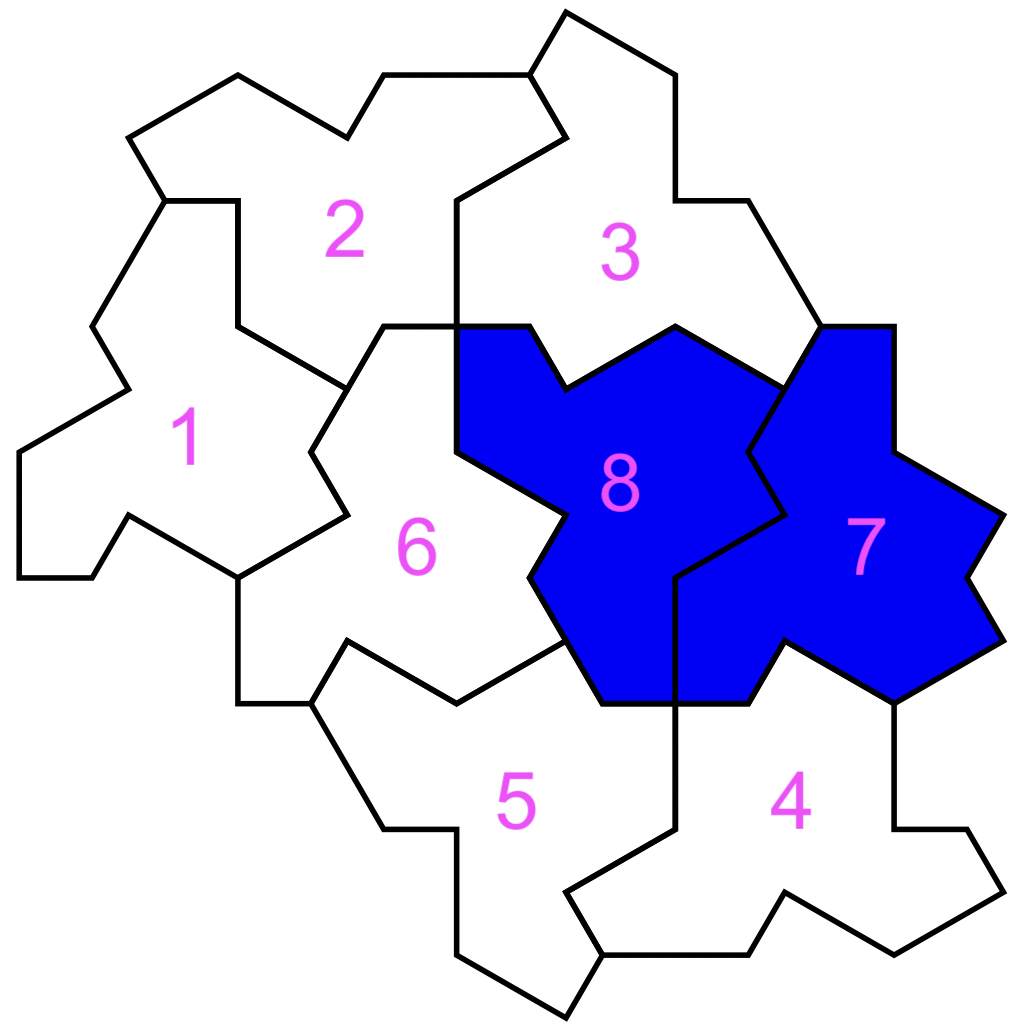}
		\end{minipage}
		\begin{minipage}{0.4\textwidth}
			\centering
			\includegraphics[scale = 0.2]{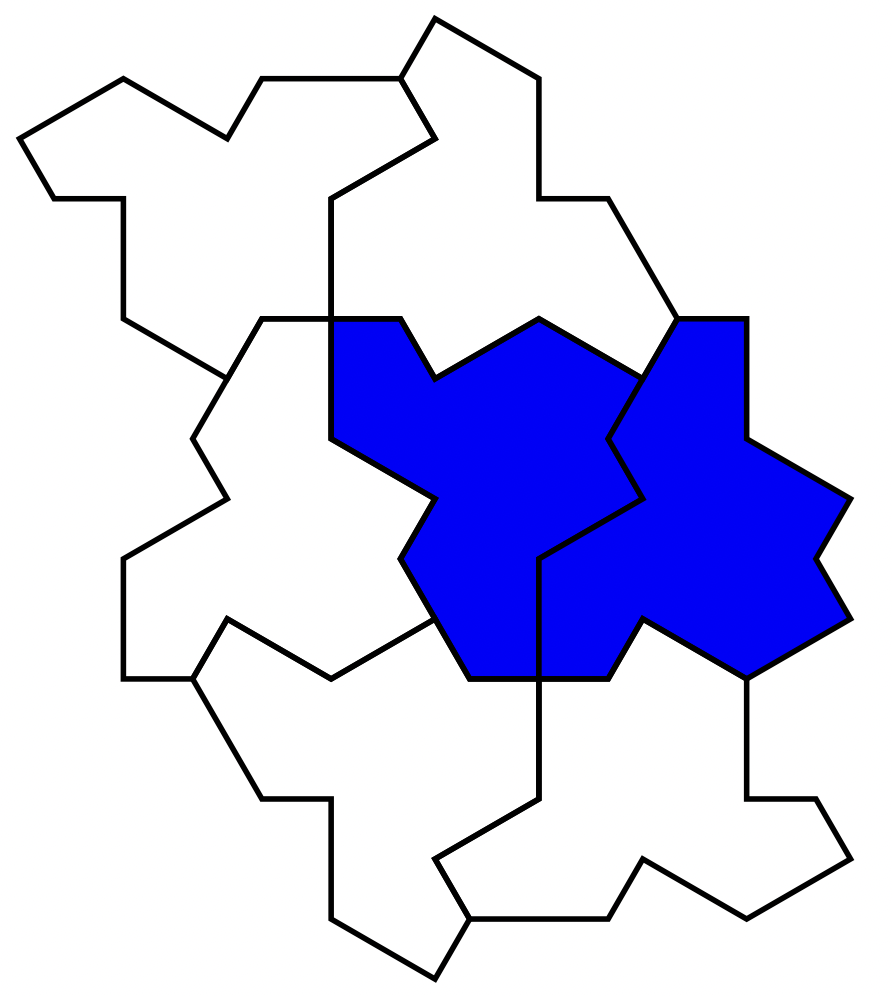}
		\end{minipage}
		\caption{$H_8$ cluster (left), $H_7$ cluster (right); $c = 1/(1+\sqrt{3})$}\label{fig:h-clusters}
	\end{figure}
	
	\begin{figure}[h!] 
		\begin{minipage}{0.24\textwidth}
			\centering
			\includegraphics[scale = 0.2]{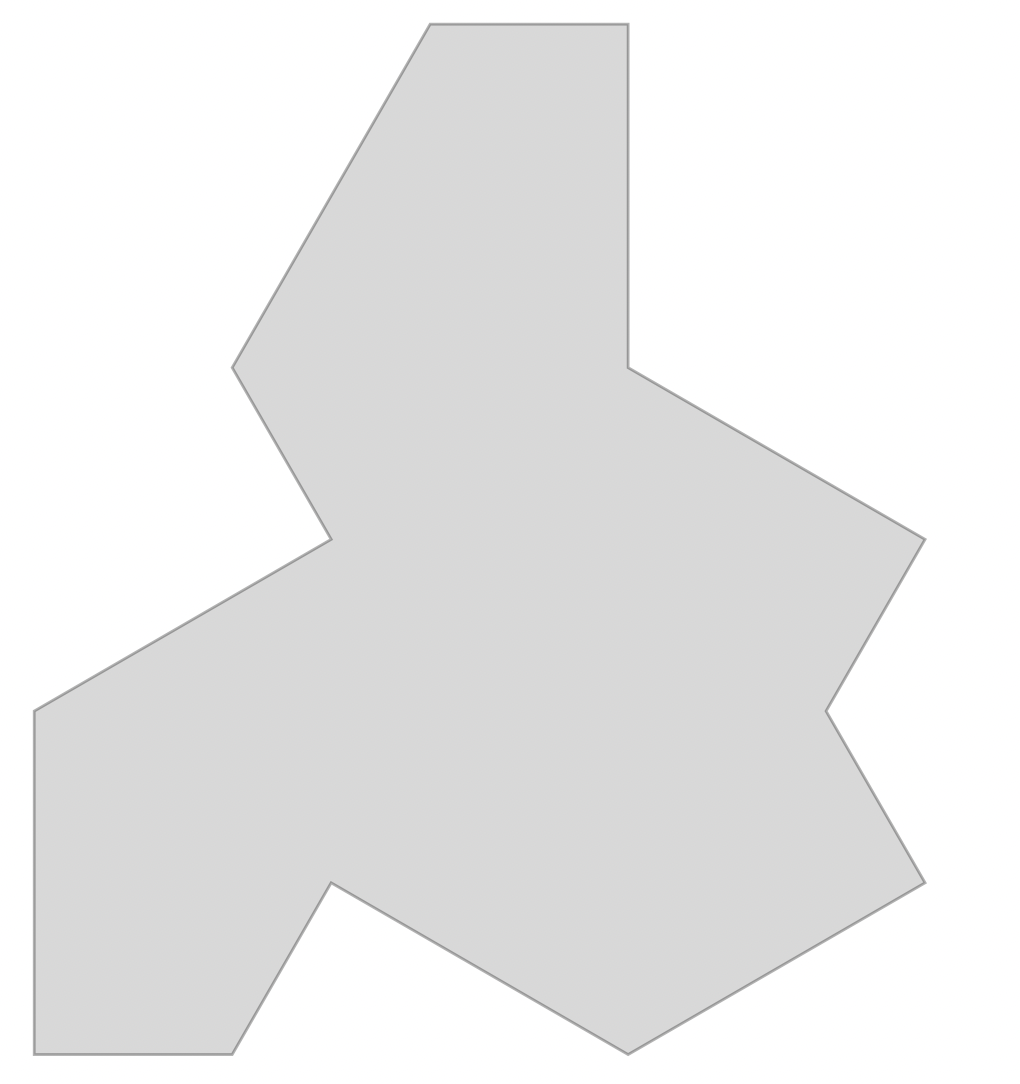}
		\end{minipage}
		\begin{minipage}{0.24\textwidth}
			\centering
			\includegraphics[scale = 0.2]{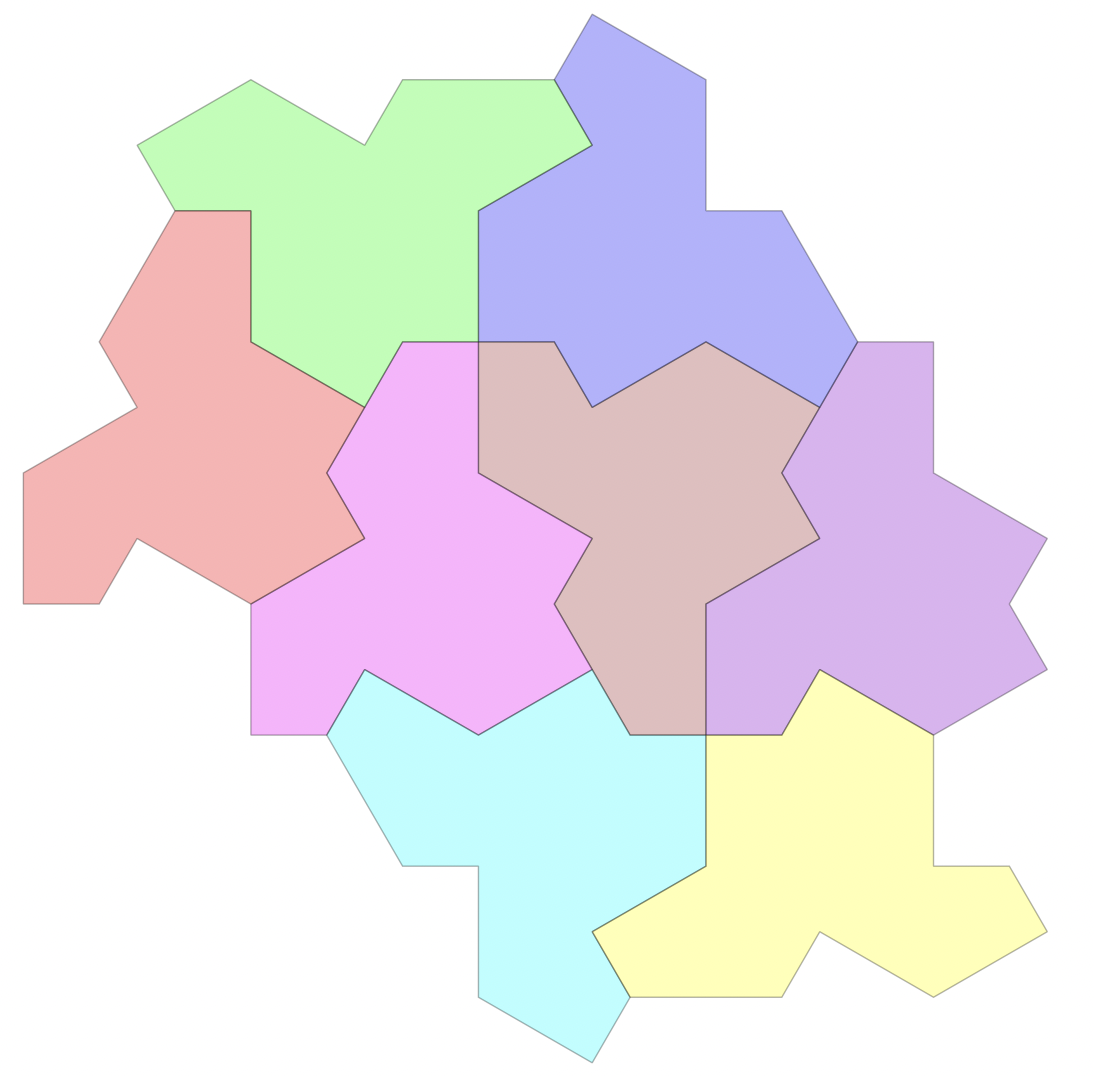}
		\end{minipage}
		\begin{minipage}{0.24\textwidth}
			\centering
			\includegraphics[scale = 0.2]{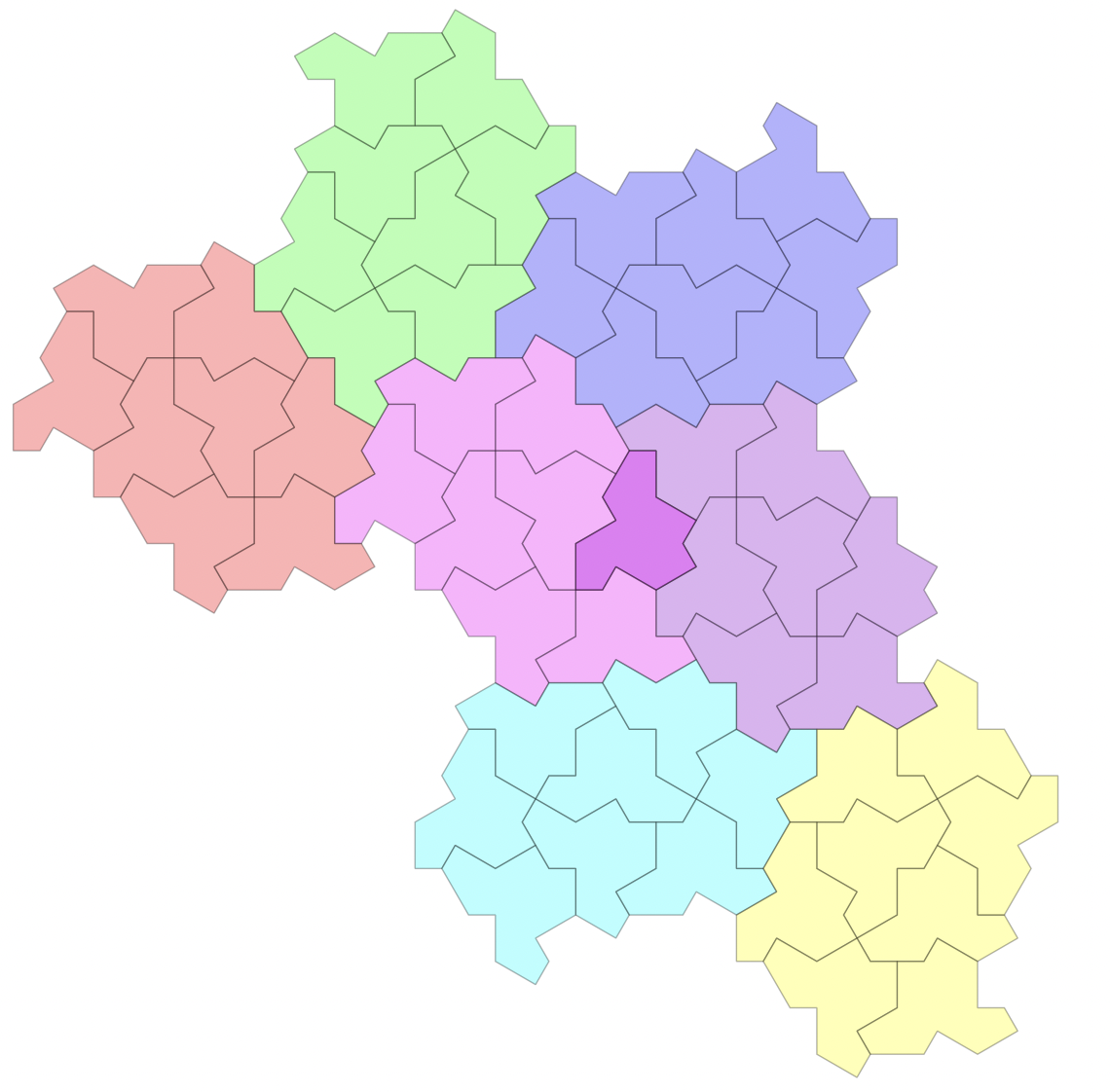}
		\end{minipage}
		\begin{minipage}{0.24\textwidth}
			\centering
			\includegraphics[scale = 0.2]{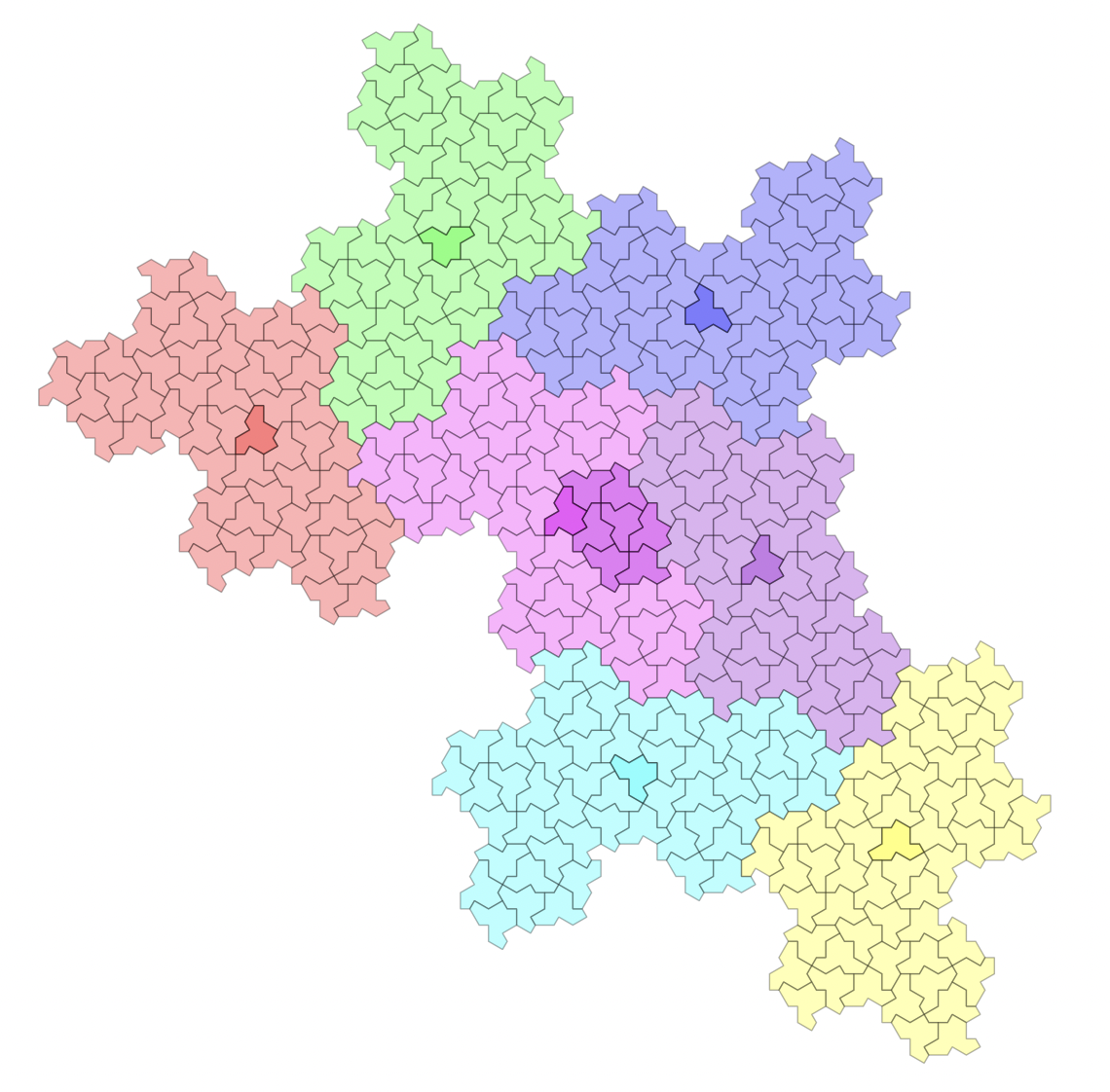}
		\end{minipage}
		\caption{$T_c$ (top left), $S_1$ (top right), $S_2$ (bottom left), $S_3$ (bottom right); $c = 1/(1+\sqrt{3})$. Each color represents a different indexed function.} \label{fig:its-hat}
	\end{figure}
	
	We will work in $\C \cong \R^2$, and use the notation in Section \ref{section:sifs}. Let $T_{c}$, for $c\in[0,1]$, denote the polygon whose vertices $\{v_i\}_{i\in\{1,2,\ldots, 13\}}$ are given by 
	\footnotesize
	\begin{align*}
		&0, 2i, (1-c)(\sqrt{3} + 3i), (1-c)(\sqrt{3} + 3i) + c(-1 + \sqrt{3}i), (1-c)(\sqrt{3} + 3i) + c(1 + 3i\sqrt{3}),\\
		&(1-c)(\sqrt{3} + 3i) + c(3 + 3i\sqrt{3}), (1-c)(\sqrt{3} + i) + c(3 + 3i\sqrt{3}), 2(1-c)\sqrt{3} + c(3 + 3i\sqrt{3}), \\
		&2(1-c)\sqrt{3} + c(2 + 2i\sqrt{3}), 2(1-c)\sqrt{3} + c(3 + \sqrt{3}i), (1-c)(\sqrt{3} - i) + c(3 + \sqrt{3}i), c(3 + \sqrt{3}i), 2c
	\end{align*}
	\normalsize
	Note this is a linear homotopy between $T_0$ (a \emph{chevron}) and $T_1$ (a \emph{comet}); Smith et al. denotes this as {\tenrm Tile$(a,b)$} where $a = 1+\sqrt{3} - b$ (so in our notation $c = a/(1+\sqrt{3})$). 
	
	Consider two clusters of $T_c$, one with 8 copies and one with 7 copies, denoted $H_8$ and $H_7$ respectively. In each cluster, two adjacent tiles are coloured, see Figure \ref{fig:h-clusters}. An imperfect substitution rule with inflation constant $\varphi^2$ (where $\varphi$ denote the golden mean) is defined as follows: each non-coloured tile is replaced with an $H_8$ cluster, and the coloured two-tile cluster is replaced with a single $H_7$ cluster (with substitutions preserving orientation of the original tile). 
	
	To convert this into an SIFS system, we will normalise each supertile and consider its creation as the single application of an IFS to the previous supertile. That is, let $\{T_c\}$ be our prototile set (thus $\alpha$ is the trivial map), and let $F_{1,c} = \{f_1^{(1)},f_2^{(1)},\ldots, f_8^{(1)}\}$ where each $f_i^{(1)}$ contracts $T_c$ by $\phi := \varphi^{-2}$ and moves it the $i$-th place in $H_8$ (with $f_1^{(1)}$ fixing the bottom left corner of $T_c$), see Figure \ref{fig:h-clusters}. Additionally, let $F_{n,c} = \{f_1^{(n)}, f_2^{(n)}, \ldots, f_7^{(n)}\}$ for $n\geq 2$ describe the mapping of contracting $S_n$ by $\phi$ and moving it to replace the $i$-th $S_{n-1}$ collection in $S_n$ (with $f_1^{(n)}$ fixing the bottom left corner of $\cup S_{n}$). 
	
	To mimic the substitution of $H_7$, we notice that $H_8 = H_7 \cup f_1^{(1)}(T_c)$. At the second level, the right most polygon in $f_6^{(2)}(\phi H_8)$ (i.e. $\pi_{\{T_c\}}(67)$) is exactly where the deleted polygon in $f_7^{(2)}(\phi H_7)$ was (i.e. $\pi_{\{T_c\}}(71)$). In general, $f_{6}^{(k)}f_7^{(k-1)}(S_{k-2})$ exactly overlaps $f_{7}^{(k)}f_1^{(k-1)}(S_{k-2})$. Thus when we process each tile, we have an equivalent characterisation of the imperfect substitution tiling. See Figure \ref{fig:its-hat} for a visualisation of the $k$-supertiles for $k=0,1,2,3$. 
	
	Finally, note this system satisfies both conditions outlined in Section \ref{section:tsifs}. In particular, $\Sigma_{T,k}$ is the set of length $k$ words which do not include $71$. Furthermore, it can be checked that the limit IFS exists (formed by taking the limit of each sequence in Theorem \ref{thm:sifs}, see Figure \ref{fig:infhat}) and the closure of its top code space is a shift of finite type (described by its one banned word $71$, see \cite{lindandmarcus} for more details). A version of this limit IFS with a change of basis can be found in \cite{toptiling}. We will now give the explicit formula for each function in the above SIFS. 
	
	\pagebreak
	
	\begin{theorem} \label{thm:sifs}
		Let $\mathsf{F}_n$ denote the $n$-th Fibonacci number (where $\mathsf{F}_1 = \mathsf{F}_2 = 1$). Then $(F_{n,c}, \{T_c\}, i \mapsto T_c)$ is a tiling SIFS, where
		\footnotesize
		\[ F_{n,c} = \left\{
		\begin{aligned}
			f_1^{(n)}(z) &= \phi z \\
			f_2^{(n)}(z) &= \phi e^{-i\pi/3}z + \phi^{n}\bigg[ \sqrt{3}(1-c)(\mathsf{F}_{2n+2}-2) + 3c(\mathsf{F}_{2n-1}-1) \bigg] \\
			&\qquad + i\phi^{n}\bigg[ 3(1-c)\mathsf{F}_{2n-1} + c\sqrt{3}(2\mathsf{F}_{2n+1}+\mathsf{F}_{2n-1}-1) \bigg] \\
			f_3^{(n)}(z) &= \phi e^{-2i\pi/3}z + \phi^{n}\bigg[ 3\sqrt{3}(1-c)(\mathsf{F}_{2n+1}-1) + 3c(2\mathsf{F}_{2n-1}+\mathsf{F}_{2n+1}-2) \bigg] \\
			&\qquad + i\phi^{n}\bigg[ -3(1-c)(\mathsf{F}_{2n-2}-1) + 3c\sqrt{3}\mathsf{F}_{2n+1} \bigg] \\
			f_4^{(n)}(z) &= \phi e^{2i\pi/3}z + \phi^{n}\bigg[ 2\sqrt{3}(1-c)\mathsf{F}_{2n+1} + 3c(2\mathsf{F}_{2n+2}-1) \bigg] \\
			&\qquad + i\phi^{n}\bigg[ -6(1-c)(\mathsf{F}_{2n+1}-1) - c\sqrt{3}(2\mathsf{F}_{2n-1}-3) \bigg] \\
			f_5^{(n)}(z) &= \phi e^{i\pi/3}z + \phi^{n}\bigg[ \sqrt{3}(1-c)(\mathsf{F}_{2n}+1) + 9c\mathsf{F}_{2n} \bigg] \\
			&\qquad + i\phi^{n}\bigg[ -3(1-c)(\mathsf{F}_{2n-1}+\mathsf{F}_{2n+1}-1) + c\sqrt{3}(-\mathsf{F}_{2n-1}-\mathsf{F}_{2n+1}+2) \bigg] \\
			f_6^{(n)}(z) &= \phi z + \phi^{n}\bigg[ \sqrt{3}(1-c)(\mathsf{F}_{2n}+\mathsf{F}_{2n-2}) + 3c(\mathsf{F}_{2n}+\mathsf{F}_{2n-2}) \bigg] \\
			&\qquad + i\phi^{n}\bigg[ -3(1-c)\mathsf{F}_{2n-1} + c\sqrt{3}\mathsf{F}_{2n-1} \bigg] \\
			f_7^{(n)}(z) &= \phi z + \phi^{n}\bigg[ \sqrt{3}(1-c)(\mathsf{F}_{2n+1}+\mathsf{F}_{2n-1}) + 3c(\mathsf{F}_{2n+1}+\mathsf{F}_{2n-1}) \bigg] \\
			&\qquad + i\phi^{n}\bigg[ -3(1-c)\mathsf{F}_{2n} + c\sqrt{3}\mathsf{F}_{2n} \bigg] \\
			f_8^{(n)}(z) &= \begin{cases} \phi\overline{z} + \phi\bigg[ \big[2\sqrt{3}(1-c) + 6c \big] + i4\sqrt{3}c\bigg] & n=1 \\ f_6^{(n)} & \text{otherwise} \end{cases}
		\end{aligned}
		\right\}
		\]
		\normalsize
	\end{theorem}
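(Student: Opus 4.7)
My plan is to split the proof into three parts: verifying that the explicit formulas correctly implement the geometric construction described in the paragraphs preceding the theorem, verifying that the sequence converges uniformly to a limit IFS, and verifying that the resulting SIFS system satisfies the two conditions of Section \ref{section:tsifs}.

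For the geometric correctness, I would argue by induction on $n$. The base case $n=1$ amounts to checking that each $f_i^{(1)}$ places a $\phi$-scaled copy of $T_c$ in the $i$-th position of the $H_8$ cluster (with $f_8^{(1)}$ being a reflected placement); this can be verified by direct evaluation at the vertices $v_i$ listed before the theorem. For the inductive step, I would use the recursive description: $f_i^{(n+1)}$ is obtained from $f_i^{(n)}$ by composing the placement-of-$S_n$-inside-$S_{n+1}$ rule with the placement-of-$S_{n-1}$-inside-$S_n$ rule. Because the supertile inflates by $\varphi^2$ at each substitution step while individual tiles scale by $\phi$, the translation components satisfy a two-step linear recursion whose solutions are linear combinations of $\phi^n \mathsf{F}_{2n+k}$. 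Matching the specific Fibonacci indices and the $(1-c)$ versus $c$ contributions reduces to tracking where the bounding polygon of $\cup S_n$ sits relative to the positions of the $H_8$ copies in $\cup S_{n+1}$.

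For contractivity and uniform convergence, each $f_i^{(n)}$ has the form $\phi R_i z + t_i^{(n)}$ (or $\phi R_i \overline{z} + t_i^{(n)}$ for $i=8$, $n=1$), where $R_i$ is a fixed rotation, so every $f_i^{(n)}$ is a contraction with ratio $\phi = \varphi^{-2}<1$. Since the linear parts are independent of $n$, convergence of the full sequence in $(\mathscr{F}_M, d_\mathscr{F})$ reduces to convergence of the translations $t_i^{(n)}$. Using Binet's formula $\mathsf{F}_m = (\varphi^m - \psi^m)/\sqrt{5}$ with $\psi = -\varphi^{-1}$, we get $\phi^n \mathsf{F}_{2n+k} = \varphi^{-2n}\mathsf{F}_{2n+k} \to \varphi^k/\sqrt{5}$ as $n\to\infty$, because $|\psi|<1$. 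Substituting into each translation yields a finite closed-form limit, and the error bound is uniform in $z$ since only the translation changes, so the sequence is indeed a SIFS in the sense of Section \ref{section:sifs}.

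I expect the main obstacle to be the inductive geometric step -- specifically, keeping track of where the bounding hull of $\cup S_n$ sits inside $\cup S_{n+1}$ as $n$ grows, since the boundary becomes rougher at each level and the Fibonacci coefficients in the formulas couple $\mathsf{F}_{2n-2}, \mathsf{F}_{2n-1}, \mathsf{F}_{2n}, \mathsf{F}_{2n+1}, \mathsf{F}_{2n+2}$. I would manage this by expressing the position of each $S_n$-copy inside $\cup S_{n+1}$ as the sum of a constant offset (in the $T_c$ frame) and a $\phi$-scaled translation from the level-$n$ placement, then collecting powers of $\varphi$ via the identity $\mathsf{F}_{m+1} = \mathsf{F}_m + \mathsf{F}_{m-1}$. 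For the two tiling conditions, the construction in the text already shows exact overlap $f_6^{(k)}f_7^{(k-1)}(S_{k-2}) = f_7^{(k)}f_1^{(k-1)}(S_{k-2})$, so processed tiles are either empty or an untouched copy of $T_c$ (condition one), and $\Sigma_{T,k}$ consists of the length-$k$ words avoiding the subword $71$, matching the shift of finite type structure of the limit attractor (condition two). This confirms the system is a tiling SIFS.
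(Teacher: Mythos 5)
Your overall skeleton --- induction on $n$ for the translation vectors, base case by direct evaluation on the $H_8$ cluster, contractivity from the fixed linear part $\phi R_i$, and convergence of $\phi^n\mathsf{F}_{2n+k}$ via Binet's formula --- matches the structure of the paper's argument, and your treatment of the two tiling conditions (exact overlap of $f_6^{(k)}f_7^{(k-1)}(S_{k-2})$ with $f_7^{(k)}f_1^{(k-1)}(S_{k-2})$, hence $\Sigma_{T,k}$ equal to the length-$k$ words avoiding $71$) is the same as the paper's. The gap is in the inductive step, which is where all the work lies. You propose to track ``where the bounding polygon of $\cup S_n$ sits relative to the positions of the $H_8$ copies in $\cup S_{n+1}$,'' but $\cup S_n$ has no bounding polygon for $n\geq 2$: as the paper emphasises, the supertile boundaries become progressively rougher, so there is no finite list of boundary vertices whose images you can follow from one level to the next. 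Asserting that the translations ``satisfy a two-step linear recursion'' presupposes that you already know the recursion; the actual content of the proof is identifying which geometric data is rigid enough to generate it.

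The paper's resolution, which your proposal is missing, is to anchor the induction on two distinguished \emph{points} rather than on the supertile boundary: $p_n$ (the vertex $v_8$ of the tile with address $\overline{7}\vert n$) and $q_n$ (the vertex $v_1$ of the tile with address $(4\overline{1})\vert n$). Remark \ref{rmk:intersections} records that the substitution rule forces three triple intersections of supertiles to be exactly the singletons $\{f_1^{(n+1)}(p_n)\}=\{f_2^{(n+1)}(q_n)\}$, and so on; since $f_1^{(n)}(z)=\phi z$ is pinned down by Banach's fixed-point theorem, the point $\phi p_n$ is already known at level $n+1$, and chasing it through the tile $\pi_T((6\overline{1})\vert n+1)$ and the overlap identity relating $f_6^{(n+1)}f_7^{(n)}$ to $f_7^{(n+1)}f_1^{(n)}$ yields closed forms for $f_6^{(n+1)}(0)$ and $f_7^{(n+1)}(0)$ together with the recursion $p_{n+1}=\phi p_n + (\text{explicit offset})$ in a single pass (Claim \ref{claim1}); a parallel computation with $q_n$ gives $f_4$ and $f_5$ (Claim \ref{claim2}), and the remaining intersection identities then determine $f_2$ and $f_3$ with no further induction. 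Without identifying these forced intersection points (or some equivalent rigid feature of the substitution), your induction has no invariant to carry from level $n$ to level $n+1$, so the step as written would not go through.
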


	\begin{remark}
		A similar SIFS and proof can be given for the Spectre's substitution system \cite[Figure 2.2]{spectretiling}. 
	\end{remark}
	
	\subsection{Proof of Theorem \ref{thm:sifs} \label{ap:sifsproof}}
	First note that by Banach's fixed-point theorem, our condition on $f_1$ combined with each map having a contraction factor of $\phi$ forces $f_1(z) = \phi z$. We will now make two remarks:
	
	\begin{remark} \label{rmk:intersections}
		Let the point $v_8$ relative to the tile $\pi_T(\overline{7}\vert n)$ be denoted $p_n$, and the point $v_1$ relative to the tile $\pi_T((4\overline{1})\vert n)$ be denoted $q_n$. The substitution rule forces 
		\begin{enumerate}[(1)]
			\item $f_1^{(n+1)}(\cup S_n) \cap f_2^{(n+1)}(\cup S_n) \cap f_6^{(n+1)}(\cup S_n) = \{ f_1^{(n+1)}(p_n) \} = \{ f_2^{(n+1)}(q_n) \}$
			\item $f_4^{(n+1)}(\cup S_n) \cap f_5^{(n+1)}(\cup S_n) \cap f_7^{(n+1)}(\cup S_n) = \{ f_4^{(n+1)}(p_n) \} = \{ f_5^{(n+1)}(q_n) \}$
			\item $f_2^{(n+1)}(\cup S_n) \cap f_3^{(n+1)}(\cup S_n) \cap f_6^{(n+1)}(\cup S_n) = \{ f_2^{(n+1)}(p_n) \} = \{ f_3^{(n+1)}(q_n) \}$
		\end{enumerate}
	\end{remark}
	
	To prove the theorem, we will first give an explicit formula for those points, then work backwards to calculate the translation vector for each function (i.e. where $v_1$ on $\pi_T((i\overline{1})\vert n)$, or $f_i^{(n)}(0)$, lies). By the substitution rule, each IFS in the sequence preserves angle, so this will suffice to find the explicit formula of each function. For a visualisation of the proof, see Figure \ref{fig:claim-sketch}. 
	
	\begin{figure}[h!] 
		\centering
		\includegraphics[scale = 0.8]{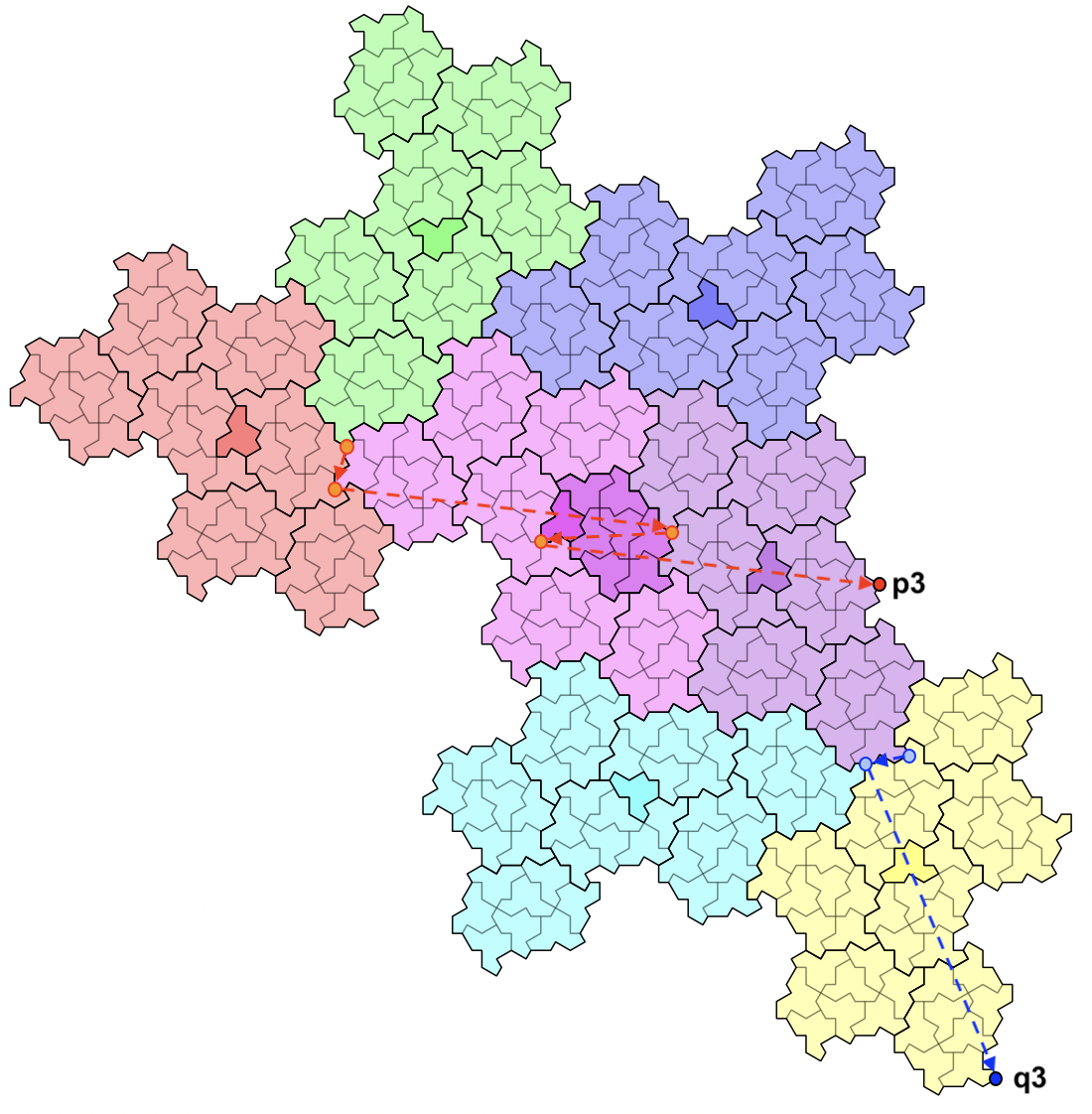}
		\caption{An example of $p_3$ and $q_3$ on $S_3$, as well as a visualisation of the proof methods for Claims \ref{claim1} and \ref{claim2}; $c = 1/(1+\sqrt{3})$}
		\label{fig:claim-sketch}
	\end{figure}
	
	\begin{claim} \label{claim1}
		\begin{align*}
			p_n = \phi^{n}\bigg[ v_8 &+ \Big[ \sqrt{3}(1-c)(\mathsf{F}_{2n+2}+\mathsf{F}_{2n} - 1) + 3c(\mathsf{F}_{2n+2}+\mathsf{F}_{2n}-1) \Big] \\
			&+ i\Big[ -3(1-c)(\mathsf{F}_{2n+1}-1) + c\sqrt{3}(\mathsf{F}_{2n+1}-1) \Big] \bigg]
		\end{align*}
	\end{claim}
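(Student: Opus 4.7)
The plan is to prove Claim \ref{claim1} by induction on $n \geq 1$, exploiting the fact that $f_7^{(k)}$ preserves orientation without rotation for every $k$ --- this is readable directly from the $H_8$ substitution rule, since the 7th tile has the same orientation as $T_c$ at every scale. Consequently $f_7^{(k)}(z) = \phi z + t_7^{(k)}$ for some translation $t_7^{(k)}$, and we obtain the recursion $p_n = f_7^{(n)}(p_{n-1}) = \phi\, p_{n-1} + t_7^{(n)}$. The task thus reduces to an independent geometric determination of $t_7^{(n)}$, to be combined with the inductive hypothesis.

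For the base case $n = 1$, I would compute $f_7^{(1)}$ from the coordinates of the 7th tile of the $H_8$ cluster (Figure \ref{fig:h-clusters}) applied to $T_c$, evaluate $p_1 = f_7^{(1)}(v_8)$ directly, and check this matches the claimed formula using $\mathsf{F}_2 = 1$, $\mathsf{F}_3 = 2$, $\mathsf{F}_4 = 3$. For the inductive step, I would determine $t_7^{(n)}$ from the geometric constraint that $f_7^{(n)}(S_{n-1})$ is pinned in place by adjacency to tiles $1$ and $6$ in the $H_8$ cluster at level $n$: certain boundary vertices of these three sub-supertiles must coincide. By identifying which vertices of $S_{n-1}$ these are (in the spirit of Remark \ref{rmk:intersections}), I would express $t_7^{(n)}$ in terms of $p_{n-1}$ and $q_{n-1}$, the latter being the subject of a companion claim proved in parallel. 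Substituting into the recursion and simplifying with the Fibonacci identities $\mathsf{F}_{2n+2} - \mathsf{F}_{2n-2} = \mathsf{F}_{2n+1} + \mathsf{F}_{2n-1}$ and $\mathsf{F}_{2n+1} - \mathsf{F}_{2n-1} = \mathsf{F}_{2n}$ then yields the claimed expression for $p_n$.

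The principal obstacle is the geometric bookkeeping: one must identify precisely which vertices on the boundary of $S_{n-1}$ serve as attachment points when placing $f_7^{(n)}(S_{n-1})$ into the cluster, and argue that these are exactly the points $p_{n-1}$ and $q_{n-1}$ encoded by the two claims, so that the induction can close. Once this identification is in place (as sketched in Figure \ref{fig:claim-sketch}), the remainder is mechanical Fibonacci algebra, with the $\mathsf{F}_{2n+2}+\mathsf{F}_{2n}$ and $\mathsf{F}_{2n+1}$ factors in the formula arising naturally from the $\varphi^{-2}$ inflation interacting with the recursive accumulation of translations through the standard Fibonacci recurrence.
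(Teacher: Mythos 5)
Your overall strategy is the paper's: both proofs run an induction on the recursion $p_{n+1}=f_7^{(n+1)}(p_n)=\phi p_n+t_7^{(n+1)}$ (using that $f_7^{(k)}$ is rotation-free), verify the base case by direct computation, pin the translation by geometric incidence constraints, and finish with Fibonacci algebra. However, you have deferred exactly the step that constitutes the proof --- the determination of $t_7^{(n)}$ --- and the route you sketch for it does not match the actual mechanism and would not close as stated. There is no distinguished triple point of supertiles $1$, $6$, $7$ to pin tile $7$ directly; Remark \ref{rmk:intersections} lists triple points for $\{1,2,6\}$, $\{4,5,7\}$ and $\{2,3,6\}$ only. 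The paper's argument is two-stage: first, the triple point of supertiles $1$, $2$, $6$ equals $f_1^{(n+1)}(p_n)=\phi p_n$ and sits at the $v_4$--$v_5$ midpoint of the tile with address $6\overline{1}$, which pins $f_6^{(n+1)}(0)$ (Equation \eqref{eq:claim1one}); second, the exact overlap $f_6^{(n+1)}f_7^{(n)}(S_{n-1})=f_7^{(n+1)}f_1^{(n)}(S_{n-1})$ coming from the $H_7$-for-$H_8$ replacement identifies $f_6^{(n+1)}(p_n)$ with $f_7^{(n+1)}(\phi p_{n-1})$, which pins $f_7^{(n+1)}(0)$ (Equation \eqref{eq:claim1two}).

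A consequence you have missed is that the resulting recursion is second order in $p$ alone: $p_{n+1}$ depends on both $p_n$ and $p_{n-1}$, so the induction must be strong (``assume true for $1,\dots,n$''), and $q$ never enters Claim \ref{claim1} at all. The logical order in the paper is the reverse of what you propose: Claim \ref{claim1} is self-contained and Claim \ref{claim2} then consumes the translation vector of $f_7^{(n+1)}$ obtained here. If you insist on expressing $t_7^{(n)}$ through $q_{n-1}$ via a parallel induction, the only constraint tying tile $7$ to a $q$-point is $f_4^{(n+1)}(\cup S_n)\cap f_5^{(n+1)}(\cup S_n)\cap f_7^{(n+1)}(\cup S_n)=\{f_5^{(n+1)}(q_n)\}$, and that constraint locates tiles $4$ and $5$ relative to an already-placed tile $7$, not the other way around; so your inductive step has no way to produce $t_7^{(n)}$ from the data you assume. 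You need the $H_7$/$H_8$ overlap identity, and with it the claim closes exactly as in the paper.
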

	
	\emph{Proof by induction:} The base case can be shown by direct calculation. Now assume true for $1,2,\ldots,n$. By our first remark, the intersection point between $f_1^{(n+1)}(S_n)$, $f_2^{(n+1)}(S_n)$ and $f_6^{(n+1)}(S_n)$ in $S_{n+1}$ is $\phi p_n$ (first orange point). This point lies on the tile $\pi_T((6\overline{1})\vert  n+1)$, which we will denote $t$, and is the midpoint between $v_4$ and $v_5$ (relative to $t$) thus to get the expression for $v_1$ relative to $t$ (second orange point), we calculate 
	
	\footnotesize
	\begin{equation}\label{eq:claim1one}
		\begin{aligned}
			&\phi p_n - \phi^{n+1}\left( \frac{(1-c)(\sqrt{3}+3i) + c(-1+\sqrt{3}i) + (1-c)(\sqrt{3}+3i) + c(1+3i\sqrt{3})}{2}\right) \\
			&= \phi^{n+1}\Bigg( 2(1-c)\sqrt{3} + c(3+3i\sqrt{3}) + \Big[ \sqrt{3}(1-c)(\mathsf{F}_{2n+2}+\mathsf{F}_{2n} - 1) + 3c(\mathsf{F}_{2n+2}+\mathsf{F}_{2n}-1) \Big] \\
			&\qquad + i\Big[ -3(1-c)(\mathsf{F}_{2n+1}-1) + c\sqrt{3}(\mathsf{F}_{2n+1}-1) \Big] - (1-c)(\sqrt{3}+3i) - 2\sqrt{3}ic \Bigg) \\
			&= \phi^{n+1}\Bigg( \Big[ \sqrt{3}(1-c)(\mathsf{F}_{2n+2}+\mathsf{F}_{2n}) + 3c(\mathsf{F}_{2n+2}+\mathsf{F}_{2n}) \Big] + i\Big[ -3(1-c)\mathsf{F}_{2n+1} + c\sqrt{3}\mathsf{F}_{2n+1} \Big] \Bigg) ,
		\end{aligned}
	\end{equation}
	\normalsize
	
	where the first equality comes from the inductive hypothesis. We can now consider the point $p_n$ relative to $f_6^{(n+1)}(S_n)$ (the third orange point) via the inductive hypothesis by adding $\phi p_n$ to \eqref{eq:claim1one}. Now notice by the overlap rule that this is also $p_{n-1}$ relative to $f_7^{(n+1)}f_1^{(n)}(S_{n-1})$. Thus the point $v_1$ relative to $f_7^{(n+1)}(S_n)$ (the fourth orange point) is
	
	\footnotesize
	\begin{equation}\label{eq:claim1two}
		\begin{aligned}
			&\phi^{n+1}\Bigg( \Big[ \sqrt{3}(1-c)(\mathsf{F}_{2n+2}+\mathsf{F}_{2n}) + 3c(\mathsf{F}_{2n+2}+\mathsf{F}_{2n}) \Big] + i\Big[ -3(1-c)\mathsf{F}_{2n+1} + c\sqrt{3}\mathsf{F}_{2n+1} \Big] \Bigg) \\
			&\qquad + \phi p_n - \phi^2 p_{n-1} \\
			&= \phi^{n+1}\Bigg( \Big[ \sqrt{3}(1-c)(2\mathsf{F}_{2n+2}+\mathsf{F}_{2n}-\mathsf{F}_{2n-2}) + 3c(2\mathsf{F}_{2n+2}+\mathsf{F}_{2n}-\mathsf{F}_{2n-2}) \Big] \\
			&\qquad + i\Big[ -3(1-c)(2\mathsf{F}_{2n+1}-\mathsf{F}_{2n-1}) + c\sqrt{3}(2\mathsf{F}_{2n+1}-\mathsf{F}_{2n-1}) \Big] \Bigg) \\ 
			&= \phi^{n+1}\Bigg( \Big[ \sqrt{3}(1-c)(\mathsf{F}_{2n+3}+\mathsf{F}_{2n+1}) + 3c(\mathsf{F}_{2n+3}+\mathsf{F}_{2n+1}) \Big] + i\Big[ -3(1-c)\mathsf{F}_{2n+2} + c\sqrt{3}\mathsf{F}_{2n+2} \Big] \Bigg) .
		\end{aligned}
	\end{equation}
	\normalsize
	
	Finally, $p_{n+1}$ is $p_n$ relative to $f_{7}^{(n+1)}(S_n)$, thus we once again use the inductive hypothesis to add $\phi p_n$ to \eqref{eq:claim1two}:
	
	\footnotesize
	\begin{align*}
		p_{n+1} &= \phi^{n+1}\Bigg( \Big[ \sqrt{3}(1-c)(\mathsf{F}_{2n+3}+\mathsf{F}_{2n+1}) + 3c(\mathsf{F}_{2n+3}+\mathsf{F}_{2n+1}) \Big] \\
		&\qquad \qquad + i\Big[ -3(1-c)\mathsf{F}_{2n+2} + c\sqrt{3}\mathsf{F}_{2n+2} \Big] \Bigg) + \phi p_n \\
		&= \phi^{n+1}\Bigg( \Big[ \sqrt{3}(1-c)(\mathsf{F}_{2n+3}+\mathsf{F}_{2n+2} + \mathsf{F}_{2n+1} + \mathsf{F}_{2n} - 1) + 3c(\mathsf{F}_{2n+3}+\mathsf{F}_{2n+2} + \mathsf{F}_{2n+1} + \mathsf{F}_{2n} - 1) \Big] \\
		&\qquad \qquad + i\Big[ -3(1-c)(\mathsf{F}_{2n+2}+\mathsf{F}_{2n+1}-1) + c\sqrt{3}(\mathsf{F}_{2n+2}+\mathsf{F}_{2n+1} - 1) \Big] \Bigg) \\
		&= \phi^{n+1}\Bigg( \Big[ \sqrt{3}(1-c)(\mathsf{F}_{2n+4} + \mathsf{F}_{2n+2} - 1) + 3c(\mathsf{F}_{2n+4} + \mathsf{F}_{2n+2} - 1) \Big] \\
		&\qquad \qquad + i\Big[ -3(1-c)(\mathsf{F}_{2n+3}-1) + c\sqrt{3}(\mathsf{F}_{2n+3} - 1) \Big] \Bigg) .
	\end{align*}		
	\normalsize
	\bqed \\
	
	\begin{remark} 
		Now notice in Claim \ref{claim1}, we simultaneously proved (via induction) the left-most point of $f_6^{(n+1)}(S_n)$ and $f_7^{(n+1)}(S_n)$, and thus the translation vectors for $f_6^{(n)}$ and $f_7^{(n)}$ for all $n\in\N$ (namely \eqref{eq:claim1one} and \eqref{eq:claim1two} respectively). 
	\end{remark}
	
	\begin{claim} \label{claim2}
		\begin{align*}
			q_n = \phi^{n}\Bigg(\bigg[ 2\sqrt{3}(1-c)\mathsf{F}_{2n+1} + 3c(2\mathsf{F}_{2n+2}-1) \bigg] + i\bigg[ -6(1-c)(\mathsf{F}_{2n+1}-1) - c\sqrt{3}(2\mathsf{F}_{2n-1}-3) \bigg] \Bigg)
		\end{align*}
	\end{claim}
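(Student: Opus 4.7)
The approach mirrors Claim \ref{claim1}'s induction on $n$. The base case $n=1$ reduces to the geometry of the $H_8$ cluster: by definition $q_1 = f_4^{(1)}(0)$ is the image of the origin under the contraction-and-translation that places tile $4$ in $H_8$ at scale $\phi$, which one verifies agrees with the $n=1$ specialisation of the claim by direct substitution using $\mathsf{F}_1 = \mathsf{F}_2 = 1$, $\mathsf{F}_3 = 2$, $\mathsf{F}_4 = 3$.

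For the inductive step, suppose Claims \ref{claim1} and \ref{claim2} hold up to level $n$, and compute $q_{n+1} = f_4^{(n+1)}(0)$. The key tool is part (2) of Remark \ref{rmk:intersections}, which identifies the triple-intersection point $r := f_4^{(n+1)}(p_n) = f_5^{(n+1)}(q_n)$ as the common vertex of $f_4^{(n+1)}(\cup S_n)$, $f_5^{(n+1)}(\cup S_n)$, and $f_7^{(n+1)}(\cup S_n)$. Since $f_7^{(n+1)}$ is purely a $\phi$-contraction-plus-translation whose translation vector was already pinned down in the remark following Claim \ref{claim1}, and since the $H_8$ adjacency structure tells us exactly which vertex of which depth-$n$ sub-tile of $f_7^{(n+1)}(\cup S_n)$ the point $r$ coincides with, the absolute position of $r$ is expressible as $f_7^{(n+1)}(0)$ plus a $\phi^{n+1}$-scaled offset read off from the fixed base $H_8$ geometry. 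Given $r$, and knowing that $f_4^{(n+1)}$ has rotation factor $\phi e^{2i\pi/3}$, one recovers $q_{n+1} = r - \phi e^{2i\pi/3} p_n$.

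Substituting the inductive expression for $p_n$ from Claim \ref{claim1} and expanding with $e^{2i\pi/3} = (-1+i\sqrt{3})/2$ yields a linear combination of Fibonacci numbers $\mathsf{F}_{2n-2}, \ldots, \mathsf{F}_{2n+2}$ with coefficients in $(1-c)$ and $c$. Iterated use of the recurrence $\mathsf{F}_{m+1} = \mathsf{F}_m + \mathsf{F}_{m-1}$ consolidates these into the higher indices $\mathsf{F}_{2n+3}$ and $\mathsf{F}_{2n+4}$ appearing in the target $q_{n+1}$ formula, completing the induction.

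The main obstacle is purely algebraic bookkeeping. The rotation by $e^{2i\pi/3}$ blends the real and imaginary parts of $p_n$, so the $(1-c)$ and $c$ contributions no longer appear in tidy isolation and must be carefully retracked through the arithmetic. Verifying that the additive constants --- the $-1$ inside $2\mathsf{F}_{2n+2}-1$ and the $-3$ inside $2\mathsf{F}_{2n-1}-3$ in the target --- emerge correctly from the constant offsets already present in $p_n$ (the $\mathsf{F}_{2n+1}-1$ shift in particular) is the most delicate step; once the indices align, the rest is routine Fibonacci manipulation.
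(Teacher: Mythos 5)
Your skeleton is the paper's: induction on $n$, base case by direct computation, the triple-intersection point $r = f_4^{(n+1)}(p_n) = f_5^{(n+1)}(q_n)$ from Remark \ref{rmk:intersections}(2) located through the $f_7^{(n+1)}$ copy, and then the recovery $q_{n+1} = r - \phi e^{2i\pi/3} p_n$ followed by Fibonacci consolidation. The final identity and the algebraic endgame are exactly what the paper does.

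The gap is in how you locate $r$. You assert that $r$ sits at ``$f_7^{(n+1)}(0)$ plus a $\phi^{n+1}$-scaled offset read off from the fixed base $H_8$ geometry.'' That is false, and it is precisely the point where the inductive hypothesis for Claim \ref{claim2} itself must enter. The point $r$ is the $v_4$--$v_5$ midpoint of the tile $\pi_T((74\overline{1})\vert n+1) = f_7^{(n+1)}(\pi_T((4\overline{1})\vert n))$, and the position of $\pi_T((4\overline{1})\vert n)$ inside $\cup S_n$ is by definition $q_n$ --- a quantity that drifts with $n$ through the Fibonacci numbers exactly because the substitution is imperfect. The correct expression is $r = \phi q_n + f_7^{(n+1)}(0) + \phi^{n+1} e^{2i\pi/3} m$, where only $m$ (the $v_1$-to-midpoint vector of the base tile) is a fixed base-geometry constant; the term $\phi q_n$ is supplied by the inductive hypothesis, and it is what the paper adds to the translation vector \eqref{eq:claim1two} to obtain its ``first blue point.'' As written, your outline assumes Claim \ref{claim2} up to level $n$ but never actually deploys it --- a red flag for any induction --- and following it literally drops the $\phi q_n$ term, which yields the wrong $q_{n+1}$. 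Restoring that term turns your outline into the paper's proof.
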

	
	\emph{Proof by induction:} The base case can be shown by direct calculation. Now assume true for $1,2,\ldots, n$. By the inductive hypothesis and our previous result, we know the expression for $q_n$ relative to $f_7^{(n+1)}(S_n)$ (the first blue point) is 
	
	\footnotesize
	\begin{align*}
			&\phi^{n+1}\Bigg(\bigg[ 2\sqrt{3}(1-c)\mathsf{F}_{2n+1} + 3c(2\mathsf{F}_{2n+2}-1) \bigg] + i\bigg[ -6(1-c)(\mathsf{F}_{2n+1}-1) - c\sqrt{3}(2\mathsf{F}_{2n-1}-3) \bigg] \Bigg) \\
			&\qquad + \phi^{n+1}\Bigg( \Big[ \sqrt{3}(1-c)(\mathsf{F}_{2n+3}+\mathsf{F}_{2n+1}) + 3c(\mathsf{F}_{2n+3}+\mathsf{F}_{2n+1}) \Big] + i\Big[ -3(1-c)\mathsf{F}_{2n+2} + c\sqrt{3}\mathsf{F}_{2n+2} \Big] \Bigg) \\
			&= \phi^{n+1}\Bigg( \Big[ \sqrt{3}(1-c)(\mathsf{F}_{2n+5}-2\mathsf{F}_{2n}) + 3c(\mathsf{F}_{2n+5}-1) \Big] \\
			&\qquad + i\Big[ -3(1-c)(\mathsf{F}_{2n+3}+\mathsf{F}_{2n+1}-2) + c\sqrt{3}(\mathsf{F}_{2n}+\mathsf{F}_{2n-2}+3) \Big] \Bigg) .
	\end{align*}
	\normalsize
	
	By our second remark, the intersection point between $f_4^{(n+1)}(S_n)$, $f_5^{(n+1)}(S_n)$ and $f_7^{(n+1)}(S_n)$ is $f_4^{(n+1)}(p_n)$ (the second blue point), which equivalently is the midpoint between $v_4$ and $v_5$ relative to the tile $\pi_T((74\overline{1})\vert n+1)$. Since the expression above represents the point $v_1$ on this tile, then the expression for $f_4^{(n+1)}(p_n)$ is 
	
	\footnotesize
	\begin{equation}\label{eq:claim2one}
		\begin{aligned}
			&\phi^{n+1}\Bigg( \Big[ \sqrt{3}(1-c)(\mathsf{F}_{2n+5}-2\mathsf{F}_{2n}) + 3c(\mathsf{F}_{2n+5}-1) \Big] + i\Big[ -3(1-c)(\mathsf{F}_{2n+3}+\mathsf{F}_{2n+1}-2) \\
			&\qquad + c\sqrt{3}(\mathsf{F}_{2n}+\mathsf{F}_{2n-2}+3) \Big] \Bigg) + e^{2i\pi/3}\bigg( (1-c)(\sqrt{3}+3i) + 2\sqrt{3}ic \bigg) \\
			&= \phi^{n+1}\Bigg( \Big[ \sqrt{3}(1-c)(\mathsf{F}_{2n+5}-2\mathsf{F}_{2n}-2) + 3c(\mathsf{F}_{2n+5}-2) \Big] + i\Big[ -3(1-c)(\mathsf{F}_{2n+3}+\mathsf{F}_{2n+1}-2) \\
			&\qquad + c\sqrt{3}(\mathsf{F}_{2n}+\mathsf{F}_{2n-2}+2) \Big] \Bigg) .
		\end{aligned}
	\end{equation}
	\normalsize
	
	Finally, since $q_{n+1} = f_4^{(n+1)}(0)$, we can find its expression via
	
	\footnotesize
	\begin{equation}\label{eq:claim2two}
		\begin{aligned}
			&\phi^{n+1}\Bigg( \Big[ \sqrt{3}(1-c)(\mathsf{F}_{2n+5}-2\mathsf{F}_{2n}-2) + 3c(\mathsf{F}_{2n+5}-2) \Big] + i\Big[ -3(1-c)(\mathsf{F}_{2n+3}+\mathsf{F}_{2n+1}-2) \\
			&\qquad + c\sqrt{3}(\mathsf{F}_{2n}+\mathsf{F}_{2n-2}+2) \Big] \Bigg) - e^{2\pi i/3} \phi p_n \\
			&= \phi^{n+1}\Bigg( \Big[ \sqrt{3}(1-c)(\mathsf{F}_{2n+5}-2\mathsf{F}_{2n}-2) + 3c(\mathsf{F}_{2n+5}-2) \Big] + i\Big[ -3(1-c)(\mathsf{F}_{2n+3}+\mathsf{F}_{2n+1}-2) \\
			&\qquad + c\sqrt{3}(\mathsf{F}_{2n}+\mathsf{F}_{2n-2}+2) \Big] \Bigg) - \phi^{n+1}\Bigg(  \Big[ \sqrt{3}(1-c)(F_{2n-1}-2) - 3c(F_{2n+2}+1) \Big] \\
			&\qquad + i\Big[ 3(1-c)F_{2n+2} + c\sqrt{3}(F_{2n+2}+2F_{2n}-1) \Big] \Bigg) \\
			&= \phi^{n+1}\Bigg( \Big[ 2\sqrt{3}(1-c)\mathsf{F}_{2n+3} + 3c(\mathsf{F}_{2n+4}-1) \Big] + i\Big[ -6(1-c)(\mathsf{F}_{2n+3}-1) + c\sqrt{3}(-2\mathsf{F}_{2n+1}+3) \Big] \Bigg) .
		\end{aligned}
	\end{equation}
	\normalsize
	\bqed\\
	
	\begin{remark}
		Now notice in Claim \ref{claim2}, we simultaneously proved (via induction) the expression for $f_4^{(n+1)}(0)$, and thus the translation vectors for $f_4^{(n)}$ for all $n\in\N$ (namely \eqref{eq:claim2two}). Furthermore, \eqref{eq:claim2one} is the point $q_n$ relative to $f_5^{(n+1)}(S_n)$, thus we can get the point $f_5^{(n+1)}(0)$, and the translation vector for $f_5^{(n)}$ for all $n\in\N$, via
		
		\footnotesize
		\begin{align*}
			&\phi^{n+1}\Bigg( \Big[ \sqrt{3}(1-c)(\mathsf{F}_{2n+5}-2\mathsf{F}_{2n}-2) + 3c(\mathsf{F}_{2n+5}-2) \Big] + i\Big[ -3(1-c)(\mathsf{F}_{2n+3}+\mathsf{F}_{2n+1}-2) \\
			&\qquad  + c\sqrt{3}(\mathsf{F}_{2n}+\mathsf{F}_{2n-2}+2) \Big] \Bigg) - e^{\pi i/3} \phi q_n \\
			&= \phi^{n+1}\Bigg( \Big[ \sqrt{3}(1-c)(\mathsf{F}_{2n+5}-2\mathsf{F}_{2n}-2) + 3c(\mathsf{F}_{2n+5}-2) \Big] + i\Big[ -3(1-c)(\mathsf{F}_{2n+3}+\mathsf{F}_{2n+1}-2) \\
			&\qquad  + c\sqrt{3}(\mathsf{F}_{2n}+\mathsf{F}_{2n-2}+2) \Big] \Bigg) - \phi^{n+1}\Bigg(\bigg[ \sqrt{3}(1-c)(4\mathsf{F}_{2n+1}-3) + 3c(\mathsf{F}_{2n+2}+\mathsf{F}_{2n-1}-2) \bigg] \\
			&\qquad + i\bigg[ 3(1-c) + c\sqrt{3}(3\mathsf{F}_{2n+2}-\mathsf{F}_{2n-1}) \bigg] \Bigg) \\
			&= \phi^{n+1}\Bigg( \Big[ \sqrt{3}(1-c)(\mathsf{F}_{2n+2}+1) + 9c\mathsf{F}_{2n+2} \Big] \\
			&\qquad + i\Big[ -3(1-c)(\mathsf{F}_{2n+3}+\mathsf{F}_{2n+1}-1) + c\sqrt{3}(-\mathsf{F}_{2n+1} -\mathsf{F}_{2n+3}+2) \Big] \Bigg) .
		\end{align*}
		\normalsize
		\bqed
	\end{remark}
	
	Finally, we are required to find $f_2^{(n)}(0)$ and $f_3^{(n)}(0)$ for all $n\in\N$. For the former, we note from Remark \ref{rmk:intersections} that $f_1^{(n+1)}(p_n) = \phi p_n = f_2^{(n+1)}(q_n)$, thus $f_2^{(n+1)}(0)$ has the expression
	
	\footnotesize
	\begin{align*}
		&\phi p_n - e^{-i\pi/3}\phi q_n \\
		= &\phi^{n+1}\bigg(\Big[ \sqrt{3}(1-c)(\mathsf{F}_{2n+2}+\mathsf{F}_{2n} + 1) + 3c(\mathsf{F}_{2n+2}+\mathsf{F}_{2n}) \Big] + i\Big[ -3(1-c)(\mathsf{F}_{2n+1}-1) \\
		&\qquad+ c\sqrt{3}(\mathsf{F}_{2n+1}+2) \Big] \bigg) - \phi^{n+1}\Bigg(\bigg[ \sqrt{3}(1-c)(-2\mathsf{F}_{2n+1}+3) + 3c(\mathsf{F}_{2n+2}-\mathsf{F}_{2n-1}+1) \bigg] \\
		&\qquad + i\bigg[ 3(1-c)(-2\mathsf{F}_{2n+1}+1) + c\sqrt{3}(-3\mathsf{F}_{2n+2}-\mathsf{F}_{2n-1}+3) \bigg] \Bigg) \\
		= &\phi^{n+1}\bigg(\Big[ \sqrt{3}(1-c)(\mathsf{F}_{2n+4} - 2) + 3c(\mathsf{F}_{2n+1} -1) \Big] + i\Big[ 3(1-c)\mathsf{F}_{2n+1} + c\sqrt{3}(2\mathsf{F}_{2n+3}+\mathsf{F}_{2n+1}-1) \Big] \bigg) .
	\end{align*}
	\normalsize
	
	Finally, from the same remark $f_2^{(n+1)}(p_n) = \phi p_n = f_3^{(n+1)}(q_n)$, thus $f_3^{(n+1)}(0)$ has the expression
	
	\footnotesize
	\begin{align*}
		&f_2^{(n+1)}(p_n) - e^{-2i\pi/3}\phi q_n \\
		= &e^{-i\pi/3}\phi^{n+1}\bigg(\Big[ \sqrt{3}(1-c)(\mathsf{F}_{2n+2}+\mathsf{F}_{2n} + 1) + 3c(\mathsf{F}_{2n+2}+\mathsf{F}_{2n}) \Big] + i\Big[ -3(1-c)(\mathsf{F}_{2n+1}-1) \\
		&\qquad+ c\sqrt{3}(\mathsf{F}_{2n+1}+2) \Big]\bigg) + \phi^{n+1}\Bigg(\bigg[ \sqrt{3}(1-c)(\mathsf{F}_{2n+4}-2) + 3c(\mathsf{F}_{2n+1}-1) \bigg] \\
		&\qquad+ i\bigg[ 3(1-c)\mathsf{F}_{2n+1} + c\sqrt{3}(2\mathsf{F}_{2n+3}+\mathsf{F}_{2n+1}-1) \bigg] \Bigg) \\
		&\qquad + \phi^{n+1}\Bigg(\bigg[ \sqrt{3}(1-c)(4\mathsf{F}_{2n+1}-3) + 3c(\mathsf{F}_{2n+2}+\mathsf{F}_{2n-1}-2) \bigg] \\
		&\qquad + i\bigg[ 3(1-c) + c\sqrt{3}(3\mathsf{F}_{2n+2}-\mathsf{F}_{2n-1}) \bigg] \Bigg) \\
		= &\phi^{n+1}\bigg(\Big[ \sqrt{3}(1-c)(-\mathsf{F}_{2n-1}+2) + 3c(\mathsf{F}_{2n+2}+1) \Big] + i\Big[ -3(1-c)\mathsf{F}_{2n+2} + c\sqrt{3}(-\mathsf{F}_{2n+2}-2\mathsf{F}_{2n}+1) \Big]\bigg) \\
		&\qquad + \phi^{n+1}\bigg(\Big[ \sqrt{3}(1-c)(\mathsf{F}_{2n+4} + 4\mathsf{F}_{2n+1} - 5) + 3c(\mathsf{F}_{2n+3} + \mathsf{F}_{2n-1} -3) \Big] \\
		&\qquad + i\Big[ 3(1-c)(\mathsf{F}_{2n+1}+1) + c\sqrt{3}(\mathsf{F}_{2n+6}-\mathsf{F}_{2n-1}-1) \Big]\bigg) \\
		= &\phi^{n+1}\bigg(\Big[ 3\sqrt{3}(1-c)(\mathsf{F}_{2n+3} - 1) + 3c(2\mathsf{F}_{2n+1} + \mathsf{F}_{2n+3} -2) \Big] + i\Big[ 3(1-c)(-\mathsf{F}_{2n} +1) + 3c\sqrt{3}\mathsf{F}_{2n+3} \Big]\bigg) .
	\end{align*}
	\normalsize
	\qed \\
	
	\begin{figure}[h!]
		\centering
		\label{fig:infhat}
		\includegraphics[scale=0.45]{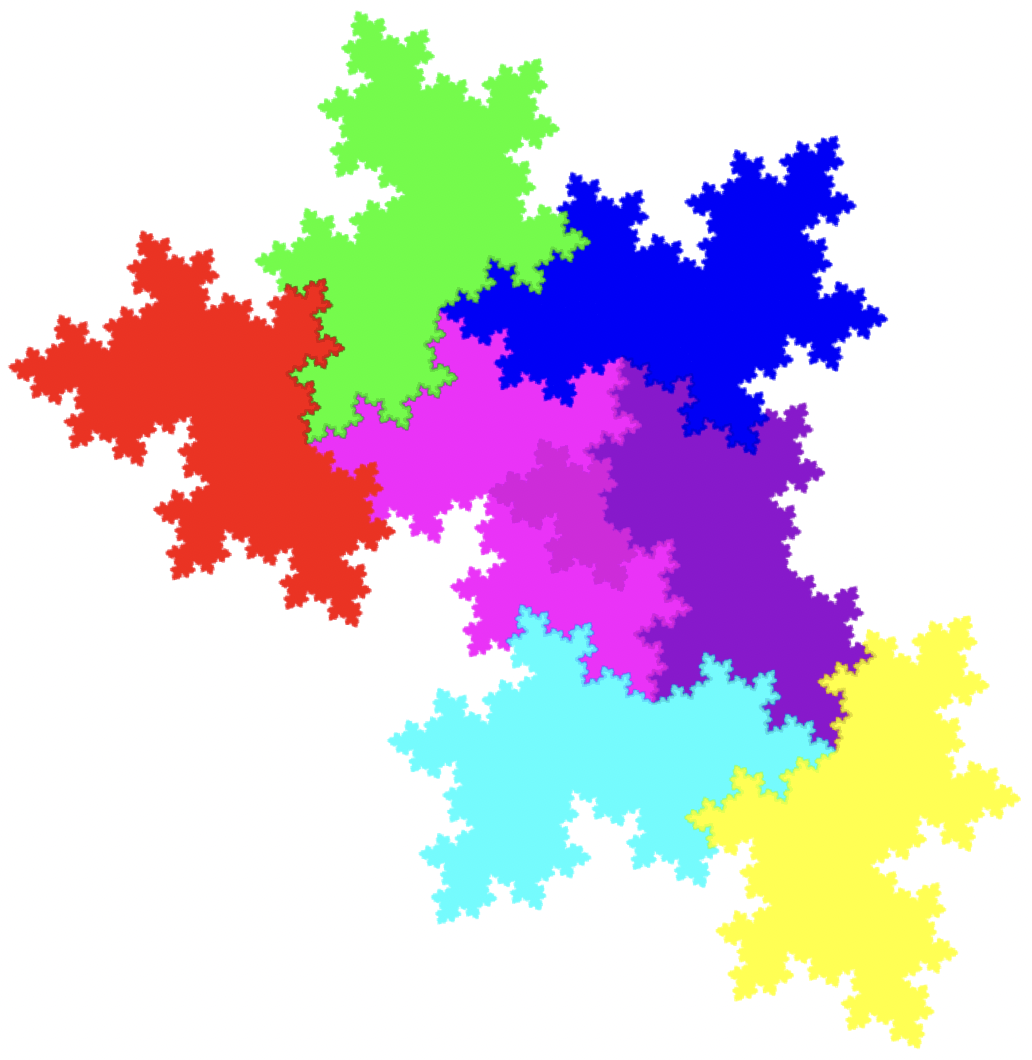}
		\caption{The attractor of the limit IFS for the SIFS given in Theorem \ref{thm:sifs}; $c = 1/(1+\sqrt{3})$. }
	\end{figure}

	\section{Acknowledgements}
	
	The author would like to thank many fruitful discussions with Michael Barnsley. In particular they credit Michael and Louisa Barnsley for the concept and calculation of the limiting hat fractal.

	\bibliography{hat-sifs}
	
\end{document}